\documentclass[a4paper,11pt]{amsproc}
\usepackage{graphicx}
\usepackage{amscd}
\usepackage{color}


\theoremstyle{plain}

\newtheorem{lemma}{Lemma}

\newtheorem{proposition}{Proposition}

\newtheorem{theorem}{Theorem}
\numberwithin{equation}{section}

\definecolor{orange}{rgb}{1,0.5,0}
\definecolor{auburn}{rgb}{0.43, 0.21, 0.1}
\definecolor{amethyst}{rgb}{0.6, 0.4, 0.8}
\definecolor{amber}{rgb}{1.0, 0.49, 0.0}
\definecolor{brown}{rgb}{0.59, 0.29, 0.0}

\newcommand{\bm}{\boldmath}

\newcommand{\m}{\mbox}

\newcommand{\ubm}{\unboldmath}

\newcommand{\gr}{\m{\bm$\nabla$\ubm}}
\newcommand{\x}{\m{\bm$x$\ubm}}
\newcommand{\bu}{\m{\bm$u$\ubm}}
\newcommand{\bv}{\m{\bm$v$\ubm}}
\newcommand{\bw}{\m{\bm$w$\ubm}}

\newcommand{\f}{\m{\bm$f$\ubm}}

\newcommand{\bnu}{\m{\bm$\nu$\ubm}}
\newcommand{\btau}{\m{\bm$\tau$\ubm}}

\newcommand{\p}{\m{\bm$\phi$\ubm}}

\newcommand{\by}{\m{\bm$y$\ubm}}

\newcommand{\bz}{\m{\bm$z$\ubm}}
\newcommand{\be}{\m{\bm$e$\ubm}}

\begin{document}

\title[Long time behavior for a dissipative shallow water model]
{Long time behavior for a dissipative shallow water model}
\author[V. Sciacca, M.E. Schonbek and M. Sammmartino]{V. Sciacca$^1$, M.E. Schonbek$^2$ and M. Sammmartino$^3$}
\address{$^{1,3}$Dipartimento di Matematica, Universit\`a di Palermo,  90123 Palermo, Italy.}
\address{$^2$Department of Mathematics, UC Santa Cruz, Santa Cruz, CA 95064, USA}

\bigskip

\email{$^1$vincenzo.sciacca@unipa.it}
\email{$^2$schonbek@math.ucsc.edu}
\email{$^3$marco@math.unipa.it}

\maketitle

\begin{abstract}
We consider the two-dimensional shallow water model derived in ~\cite{ML01}, describing 
the motion of an incompressible fluid, confined
in a shallow basin, with varying bottom topography. We construct the
approximate inertial manifolds for the associated dynamical system and
estimate its order. Finally, considering the whole domain $\mathbb{R}^2$ and
under suitable conditions on the time dependent forcing term, we prove
the $L_2$ asymptotic decay of the weak solutions.

\end{abstract}

\section{Introduction}\label{sec:1}

In ~\cite{ML01}, the authors derived the following shallow water model:
\begin{eqnarray}\label{equa}
\frac{\partial \bu}{\partial t} +\bu \cdot \gr\bu +\gr p +\eta \bu =  \qquad \qquad 
\qquad \qquad \qquad \qquad && \nonumber \\
\qquad\qquad =b^{-1}\gr \cdot [b\nu (\gr\bu 
+(\gr\bu)^{T}-\mathbf{I}\gr \cdot\bu)] + \mathbf{f}, &&
\nonumber \\ 
\gr\cdot (b \bu) = 0, \qquad \qquad\quad\quad \qquad && \nonumber \\ 
\bu(\x,t=0)= \bu_0 ,\qquad \qquad \quad \quad \quad && \\ 
\bnu \cdot \bu = 0 \qquad \qquad \quad \x \in \partial\Omega, &&
\nonumber \\ 
\btau \cdot (\gr\bu +(\gr\bu)^{T} )\cdot \bnu =-\beta \bu \cdot \btau 
\qquad \x \in\partial\Omega. &&\nonumber
\end{eqnarray}
In the above system 
$\Omega \subset \mathbb{R}^{2}$ 
is a bounded domain with sufficiently regular boundary  $\partial 
\Omega$ and
$\bu(\x,t)$ denotes the velocity of the fluid at $\x \in \Omega$ and at time $t$. The smooth function $b(\x)$ describes the 
bottom topography and satisfies $0<b_i\leq b(\x) \leq b_s $, $\nu(\x)$ is the viscosity,
$\eta(\x)
$ is a positive smooth bounded function 
defined in  $\Omega$ representing the combined actions of the friction at 
the bottom and the wind pressure, $\mathbf{I}$ is the identity , 
$\btau$ and $\bnu$ are respectively the unity tangent and normal vector to the 
boundary $\partial\Omega$,
$\beta(z)$ is a regular function defined in $\partial\Omega$ giving the friction coefficient at the boundary, and $\f(\x)
$ is the 
force term which describes the wind stress.

System \eqref{equa}  was derived  in \cite{ML01} 
from a three-dimensional anisotropic eddy viscosity model of an 
incompressible fluid confined to a shallow basin with varying bottom topography.
To obtain the shallow water model
\eqref{equa}, the authors assumed that the depth of the basin is much smaller than
the typical horizontal length, and the typical velocity of the fluid is
much smaller than the velocity of the gravity waves. This last assumption
is equivalent to consider the fluid motion on time scales much longer than
the period of the gravity waves so that averaging on time suppresses gravity
waves.  The same assumptions had been used in \cite{CHL96} starting from the Euler equations
to derive the so called lake equations. The system \eqref{equa} 
is therefore a generalization of the lake equations as the effects of the viscous stresses 
are taken into account. In \cite{ML01} the well posedness of the model was also established. 

In this paper we construct approximate inertial manifolds whose order
decreases exponentially with respect to the dimension of the manifold. We
give the dependence of all the constants with respect to the corresponding physical parameters
and in particular we give explicitly the order of the approximate inertial
manifolds. 

When $\Omega =\mathbb{R}^2$ we address the problem of the asymptotic decay of the solutions.
Under suitable 
conditions on the forcing  term and of the initial datum, we show that the energy norm of weak solution has non-uniform decay .
A weak solution which satisfies a generalized energy inequality is constructed following ~\cite{Masuda84, ORS97, KPSark}. Then using the Fourier splitting method ~\cite{Sc85,Sc86,W87} non uniform $L^2$ decay is obtained.

Similar decay questions were originally proposed by Leray in ~\cite{Leray33,Leray34} for the Navier-Stokes equations. The first proof for decay without a rate was given by  Masuda in \cite{Masuda84}
and by Kato in ~\cite{Kato84} in the case of null force and strong solutions with small data.
Schonbek ~\cite{Sc85,Sc86}, using the Fourier Splitting Method, obtained the algebraic rate of decay for weak solution with large data.
See also ~\cite{BM92,H80,KM86,KO93,M92,U87}.

The plan of the paper is the following. 
In the next section, after introducing the appropriate mathematical settings for the model equations , we prove the 
existence of the Approximate Inertial Manifolds (AIM)  and,  then  give the thickness of the thin 
neighborhood in terms of the data.

In Section \ref{sec:unbounded}  we give the  preliminary results to establish the decay of the solutions.
In section \ref{sec:nu-decay} we prove the non-uniform asymptotic decay of the $L^2$ norm of the weak solution.

\section{Bounded domain: approximate inertial manifolds}

The concept of inertial manifold was introduced in ~\cite{FST88}, as part
of the theory of dissipative differential equations. An inertial manifold for a
semigroup associated to a dissipative dynamical system, is a finite dimensional 
Lipschitz manifold which is positively invariant, and attracts all the
orbits exponentially \cite{Ro01,YS02,Te97}.
To prove the existence of the inertial manifold it is necessary that the 
so called $spectral$ $gap$ condition ~\cite{Te97} is verified.
Unfortunately, this spectral gap condition is not verified for Navier-Stokes equations.
For this reason the
notion of approximate inertial manifolds (AIM)  was introduced 
~\cite{DD,DT,FMT88,FMRT01,Ro01,R95,SS04,Te97}. The 
existence of these manifolds does not require the spectral gap condition and 
therefore can be obtained for
a broader class of dissipative dynamic systems. The AIM can be defined as a
Lipschitz manifold surrounded by a thin neighborhood and each 
orbit of the system must enter in a finite time. The order of the manifold is 
the width of the thin neighborhood and is
exponentially small compared to the size of the AIM,
hence the AIM gives an approximation of the attractor of exponential order. 
The AIM theory plays an important role
in the development of new numerical algorithms suitable
to the approximation of dissipative systems for long times
~\cite{DDT95,FMT88,JMT,JX}.

In this Section we construct a sequence of approximate inertial manifolds  $\mathcal{M}_N$ for system \eqref{equa}.
Moreover, we show that the AIM $\mathcal{M}_N$ approximate the global attractor exponentially.
For the  proof of the existence of  $\mathcal{M}_N$ and to estimate the semidistance of the attractor to $\mathcal{M}_N$, 
we shall follow the ideas of \cite{DD,DT,R95,Te97}.

\subsection{The mathematical setting}\label{sec:bounded}
In this Section we shall briefly introduce the mathematical setting appropriate for 
\eqref{equa}. More details can be found in \cite{ML01}.  
One introduces the following Hilbert spaces: 
\begin{equation}
H=\{\bu : \bu \in L_{b}^{2},\,
\gr\cdot(b\bu)=0, \, \bnu \cdot\bu=0 \, \x \in
\partial\Omega \}
\end{equation}
\begin{equation}
V=\{\bu  : \bu \in H_{b}^{1},\,
\gr\cdot(b\bu)=0, \, \bnu \cdot\bu=0 \, \x \in
\partial\Omega \}
\end{equation}
where $L_{b}^{2}$
and $H_{b}^{1}$ are Sobolev spaces with scalar products and
weighted norms defined as:
\begin{eqnarray*}
	&&(\bu ,\bv )_{b}= \int_{\Omega} b \bu \cdot \bv d\x \, , \quad
	|\bu|^2_{b}= \int_{\Omega} b|\bu |^{2} d\x \, , \\
	&&  ((\bu ,\bv ))_{b}= 
	\int_{\Omega} b \gr \bu : \gr \bv d\x \, , \quad \|\bu\|^2_{b}= 
	\int_{\Omega} b|\gr \bu |^{2} d\x  \, .
\end{eqnarray*}
The following Poincar\'e inequality holds:
\begin{equation}\label{poincare}
|\bu|_b \leq \Pi \|\bu\|_b, 
\end{equation}
where $\Pi=\Pi(\Omega)$. 

{
	We take the $L^2_b$ scalar product of equation \eqref{equa} with a generic function $\bv \in V$ and write \eqref{equa} in the following weak form (see ~\cite{Te3}):
	\begin{equation}\label{eq_1}
	\frac{d}{dt}(\bu,\bv)_b+\left[\bu,\bv\right]_{b\nu}+(\bu,\bu,\bv)_b+ (\eta \bu,\bv)_b=(\f,\bv)_b,
	\end{equation}
	where $\left[\cdot, \cdot\right]_{b\nu}: V \times V \rightarrow \mathbb{R}$, is a bilinear form defined as  
	\begin{eqnarray}\label{eq_1_linear}
	\left[\bu,\bv\right]_{b\nu}&=&\int_{\Omega} b\nu \left(\gr\bu 
	+(\gr\bu)^{T}-\mathbf{I}\gr \cdot\bu \right)
	: \left(\gr\bv 
	+(\gr\bv)^{T}-\mathbf{I}\gr \cdot\bv \right)  d \x + \nonumber\\
	&&\qquad \qquad + \int_{\partial \Omega} b\nu \beta \bu \cdot \bv d s,
	\end{eqnarray}
	and, $(\cdot,\cdot,\cdot)_b:V\times V\times V \rightarrow \mathbb{R}$, is a trilinear form defined by
	\begin{equation}\label{eq_1_nonlinear}
	(\bu,\bw,\bv)_b=\int_{\Omega} b \left(\bu \cdot \gr\bw \right)\bv d \x.
	\end{equation}
	\\
	The trilinear form defines a continuous  
	bilinear operator $B(\bu, \bv)=\bu \cdot \gr \bv$ from $V\times V$ into $V^{'}$ such that
	\begin{equation}\label{def_B}
	(B(\bu,\bv),\bw)_b=(\bu, \bv , \bw)_b. 
	\end{equation}
	With $ A_{b \nu}$ we denote the operator from $V \rightarrow V^{'}$ defined by  
	\begin{equation}\label{def_Abnu}
	(A_{b\nu}\bu, \bv)_b = \left[\bu, \bv \right]_{b\nu}.
	\end{equation}
	We note that  $A_{b \nu}$ is a linear unbounded operator on $H$ with
	domain
	\begin{eqnarray}
	D(A_{b \nu})&=&\{\bu \in H^{2}_{b}(\Omega), \gr \cdot b\bu=0 \quad
	\text{in} \quad \Omega, \quad \bu \cdot \bnu =0, \nonumber\\
	&& \quad \btau \cdot (\gr \bu +(\gr \bu)^{T})\cdot \bnu =-\beta \bu \cdot \btau 
	\quad
	\text{on} \quad
	\partial \Omega \} ,\nonumber
	\end{eqnarray}
	and $D(A_{b \nu})\subset V\subset H\subset V^{'} $, where the inclusions
	are continuous and dense. Moreover
	$V$ is compactly embedded in $H$.\\
	We observe that $B(\bu, \bv): D(A_{b\nu })\times D(A_{b \nu
	}) \rightarrow H$ (see again ~\cite{Te3}).\\
	Using \eqref{def_B} and \eqref{def_Abnu}, we can write \eqref{eq_1}, the weak form of equation \eqref{equa}, as:
	\begin{equation}\label{eq}
	\frac{d}{dt}\bu+A_{b\nu }\bu+B(\bu,\bu)+ \eta \bu=\f.
	\end{equation}

	\noindent Note also that
	the bilinear form $\left[ \cdot, \cdot \right]_{b\nu}$ is 
	coercive, if $\beta(\x)\geq  \kappa (\x)$, where $\kappa$ is the
	curvature of $\partial\Omega$: Supposing this hypothesis on $\beta$ we have
	
	\begin{equation}\label{coe}(A_{b\nu }\bu , \bu)_{b}\geq \bar{b} \nu_i \|\bu\|^2_b,
	\end{equation}
	where
	$$
	\bar{b}= \frac{b_i}{b_s}\,, \quad  \nu_i=\inf_{\Omega} \nu(x) ,  
	$$
	and
	$$
	b_i=\inf_{\Omega} b(x)\,, \quad b_s=\sup_{\Omega} b(x) .
	$$
	For a proof of the  cohercivity  inequality \eqref{coe}  see \cite{ML01}.

	In \cite{ML01} the authors established the well-posedness of \eqref{eq}. For completeness we state their main result:
	
	\begin{theorem}  (theorem 4.1 of \cite{ML01} )Let $\Omega$ be smooth. Suppose that $b(\x)$, $\nu(\x)$ and $\eta(\x)$ are non negative function over $\bar{\Omega}$. Suppose, moreover that $b \nu \geq C >0$ and that $\beta(\x)\geq \kappa(\x)$ on $\partial \Omega$, where $\kappa(\x)$ is the curvature of $\partial \Omega$ at $\x$. Let $\bu_{in} \in H^2_b \cap V$  and $\f \in L^{2}_{b}$.\\
		Then the system \ref{equa} has a unique solution $\bu \in L^{\infty}\left([0,T],H^2_b\right)\cap C\left([0,T],V\right)$. Moreover, $\partial_t \bu \in L^{\infty}\left([0,T],H\right)\cup L^2\left([0,T],V\right)$.
	\end{theorem}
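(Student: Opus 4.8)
The plan is to build the solution by the Faedo--Galerkin method and then pass to the limit using compactness. Since $V$ embeds compactly in $H$ and, under the coercivity hypothesis \eqref{coe}, $A_{b\nu}$ is a positive self-adjoint operator on $H$ with compact resolvent, its eigenfunctions $\{\bw_j\}_{j\ge 1}$ form an orthonormal basis of $H$. First I would project \eqref{eq} onto $V_m=\mathrm{span}\{\bw_1,\dots,\bw_m\}$ and solve the resulting quadratic system of ordinary differential equations for the Galerkin approximation $\bu_m$; the Cauchy--Lipschitz theorem provides a local solution, and the a priori bounds below render it global.

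Next I would derive energy estimates at increasing regularity. Testing the Galerkin equation with $\bu_m$ and using the coercivity $(A_{b\nu}\bu_m,\bu_m)_b\ge \bar b\,\nu_i\|\bu_m\|_b^2$ together with the cancellation $(\bu_m,\bu_m,\bu_m)_b=0$ --- which holds thanks to the weighted incompressibility $\gr\cdot(b\bu)=0$ and the no-penetration condition $\bnu\cdot\bu=0$ --- gives, after absorbing $\f$ by Young's inequality and discarding the nonnegative term $(\eta\bu_m,\bu_m)_b$, a bound for $\bu_m$ in $L^\infty([0,T],H)\cap L^2([0,T],V)$ by Gronwall. Testing next with $A_{b\nu}\bu_m$ produces $|A_{b\nu}\bu_m|_b^2$ on the left, and the nonlinear term $(B(\bu_m,\bu_m),A_{b\nu}\bu_m)_b$ is controlled in two dimensions by the Ladyzhenskaya inequality $\|\bv\|_{L^4}\le C|\bv|_b^{1/2}\|\bv\|_b^{1/2}$, so that it can be absorbed into $\tfrac12\bar b\,\nu_i|A_{b\nu}\bu_m|_b^2$ plus lower-order terms; Gronwall, fed by the already established $L^2([0,T],V)$ bound, then yields $\bu_m\in L^\infty([0,T],V)\cap L^2([0,T],D(A_{b\nu}))$ uniformly in $m$. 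To reach the time-uniform regularity $L^\infty([0,T],H^2_b)$ claimed in the statement, I would differentiate the equation in time and test with $\partial_t\bu_m$, bounding $\partial_t\bu_m$ in $L^\infty([0,T],H)\cap L^2([0,T],V)$; then, rewriting \eqref{eq} as $A_{b\nu}\bu=\f-\partial_t\bu-B(\bu,\bu)-\eta\bu$ with right-hand side in $L^\infty([0,T],H)$, the $H^2$ elliptic regularity of $A_{b\nu}$ delivers $\bu\in L^\infty([0,T],H^2_b)$.

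With these uniform bounds in hand I would extract weak-$*$ convergent subsequences in the relevant spaces; the Aubin--Lions lemma, using the compact embedding $V\hookrightarrow H$ and the bound on $\partial_t\bu_m$, upgrades this to strong convergence of $\bu_m$ in $L^2([0,T],H)$, which is precisely what is needed to pass to the limit in the bilinear term $B(\bu_m,\bu_m)$ and to identify the limit $\bu$ as a solution of \eqref{eq}. The continuity $\bu\in C([0,T],V)$ then follows from $\bu\in L^2([0,T],D(A_{b\nu}))$ and $\partial_t\bu\in L^2([0,T],H)$ by the standard Lions--Magenes interpolation argument.

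For uniqueness I would take two solutions $\bu_1,\bu_2$ sharing the same data, test the difference equation for $\bw=\bu_1-\bu_2$ with $\bw$, and use the bilinearity of $B$ together with the cancellation $(\bu_1,\bw,\bw)_b=0$ to reduce the nonlinear difference to $(\bw,\bu_2,\bw)_b$, which Ladyzhenskaya bounds by $C\|\bu_2\|_b\,|\bw|_b\,\|\bw\|_b$; absorbing the gradient factor into the coercive term and applying Gronwall with $\bu_2\in L^2([0,T],V)$ forces $\bw\equiv 0$. The genuine difficulty I anticipate lies in the higher-order estimate of the second step: the $H^2$ elliptic regularity used to pass from control of $A_{b\nu}\bu$ to control of $\bu$ in $H^2_b$ must be established in the weighted, variable-coefficient geometry, which forces a careful treatment of the Navier-slip boundary term $\int_{\partial\Omega}b\nu\beta\,\bu\cdot\bv\,ds$ in \eqref{eq_1_linear} and makes essential use of the curvature hypothesis $\beta\ge\kappa$ when integrating by parts.
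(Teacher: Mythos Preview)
The paper does not actually prove this theorem: it is stated for completeness and attributed verbatim to Theorem~4.1 of \cite{ML01}, with no argument given in the present text. So there is no ``paper's own proof'' to compare against here.

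That said, your outline is the standard Faedo--Galerkin/energy-method route for 2D Navier--Stokes--type systems, and it is almost certainly the strategy carried out in \cite{ML01}. The ingredients you list --- the cancellation $(\bu,\bu,\bu)_b=0$ from $\gr\cdot(b\bu)=0$, the 2D Ladyzhenskaya inequality to close the $V$/$D(A_{b\nu})$ estimates, Aubin--Lions compactness to pass to the limit in $B(\bu_m,\bu_m)$, and the Gronwall uniqueness argument --- are exactly the right ones, and you have correctly flagged the only nontrivial point: the $H^2$ elliptic regularity for $A_{b\nu}$ under the weighted, variable-coefficient Navier-slip boundary condition, which is where the hypothesis $\beta\ge\kappa$ enters. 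One small remark: when you differentiate in time and test with $\partial_t\bu_m$, you are implicitly using that $\f$ is time-independent (as assumed here) and that $\bu_{in}\in D(A_{b\nu})$ so that $\partial_t\bu_m(0)$ is bounded in $H$; both are consistent with the stated hypotheses.
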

	\vskip0.5cm
	The spectral problem associated to the compact self-adjoint operator
	$A_{b\nu}$
	admits solution in $H$ ~\cite{Eva}, and from the coercivity
	\eqref{coe} derives the existence of a non-decreasing sequence of positive 
	eigenvalues $\{\lambda_n\}_{n\in\mathbb{N}}$  with (see
	~\cite{Met})
	\begin{equation}\label{asym_autovalore}
	\lambda_n \sim n \, , \qquad\mbox{for} \quad  n\rightarrow \infty \, , 
	\end{equation}
	and a sequence of eigenfunctions 
	forming an orthonormal basis in $H$.
	We denote by $P_n$ the projection onto the finite dimensional space generated 
	by the first
	$n$ eigenfunctions and $Q_n =I-P_n$:
	\begin{equation}\label{oper_proj}
	P_n \bu=\by, \qquad Q_n \bu =\bz \qquad \text{and} \quad 
	\bu=\by+\bz.  
	\end{equation}
	It is easy to verify that ~\cite{Henry}:
	\begin{eqnarray}
	&&|e^{- A_{b\nu}t} Q_n |_{\mathcal{L}(H,V)} \leq 
	\bar{b}^{-\frac{1}{2}} \left(
	(\nu_i t)^{-\frac{1}{2}} + \lambda_{n+1}^{\frac{1}{2}} \right)
	e^{- \lambda_{n+1}
		t}, \quad t> 0  ,\label{hp_2}\\
	&&  |(I+\tau A_{b\nu})P_n|_{\mathcal{L}(V)} \leq (1+ \tau \lambda_n)
	\leq e^{\tau \lambda_n}, \label{hp_4}\\
	&& |I|_{\mathcal{L}(P_n H, P_n V) }\leq 
	\left( \frac{\bar{b} \nu_i}{\lambda_{n}} \right)^{-\frac{1}{2}} ,\label{hp_5}
	\end{eqnarray}

	If we consider an initial datum $\bu_{in}$ in a ball of $H$ with center  at the 
	origin and radius $R$, then there exists a time $t_{0}(R)$, depending on $R$ and on $\nu , \f
	, \Lambda , b $, such that for $t \geq t_0 $:
	\begin{equation}\label{assorbHV}
	|\bu(t)|_b \leq \rho_0 , \qquad \|\bu(t)\|_b \leq \rho_1,
	\end{equation}
	where $\rho_0 $ and $\rho_1$ are the radii of the 
	absorbing balls in $H$ and $V$, respectively, whose explicit expressions is given in ~\cite{Ott04,SS01}.
	
	Moreover, it is possible to prove that:
	\begin{equation}\label{deriHV}
	\left|\frac{d^k \bu}{dt^k}\right|_b \leq \frac{2^k k!}{\alpha^k}\rho_0, \qquad
	\left\|\frac{d^k \bu}{dt^k}\right\|_b \leq \frac{2^k k!}{\alpha^k}\rho_1 ,
	\end{equation}
	for $t\geq 2 \alpha$, where $\alpha=\alpha \left( \Omega,| \mathbf{f} 
	|_b,\|\bu_{in}\|_b,\nu_i \right)$ define the domain of time analyticity 
	\begin{equation}
	\Delta=\{ \xi \in \mathbb{C}: \Re{\xi}\leq \alpha  \,  \text{and} 
	\, |\Im{\xi}|\leq \Re{\xi} \quad \text{or} \quad \Re{\xi}\geq \alpha \, 
	\text{and} \, 
	|\Im{\xi}|\leq \alpha \}. 
	\end{equation}

	From \eqref{assorbHV}, following ~\cite{Te97}, one can derive 
	the existence of a compact global attractor $\mathcal{A}$, connected and
	maximal in $H$, and its Hausdorff dimension $\tilde{m}$ satisfies the following estimate (see ~\cite{Ott04,SS01})
	$$ \tilde{m} -1\leq
	\frac{\bar{b}}{b_s^{1/2}}\widetilde{c}_\Omega \frac{| \f |_b
		\Pi^{1/2}}{\nu_i^2 } < \tilde{m} .$$
	
	For completeness,  we recall that that a global attractor $\mathcal{A}$ for a semigroup $S(t)$ defined in $H$, is a subset of $H$ which satisfies the following properties:
	\begin{itemize}
		\item $\mathcal{A}$ is an invariant set, i.e. $S(t)\mathcal{A}=\mathcal{A}$ for every $t\geq 0$,
		\item for every $\bu_0\in H$, it holds that 
		\begin{equation}
		\textrm{dist}\left(S(t)\bu_0,\mathcal{A}\right):=\inf_{\bv \in \mathcal{A}} \left|S(t)\bu_0 - \bv \right|_{b}\rightarrow 0, \quad \text{as} \quad t\rightarrow +\infty. \nonumber
		\end{equation} 
	\end{itemize}

	As it is usual in the theory of inertial manifold, we consider the associated
	equation  derived from \eqref{equa}, setting the non linear term $B(\bu,\bu)$ identically zero when $\bu$ is outside the absorbing ball in $V$. Specifically, let  $\theta \in \mathcal{C}^1$ be  defined on $\mathbb{R}_{+}$ 
	which is $1$ in $[0,1]$ and $0$ in $[2,+\infty[$.
	Denote by
	\begin{equation}\label{btheta}
	B_{\theta}\bu=B_{\theta}(\bu, \bu)=\theta\left( \frac{\|\bu
		\|_{b}^{2}}{\rho_{1}^{2}} \right)B(\bu, \bu).
	\end{equation}
	In the sequel consider the system:
	\begin{equation}\label{equa2}
	\frac{d \bu}{dt} +A_{b\nu}\bu +\eta \bu=B_{\theta}\bu +\f .
	\end{equation}
	Clearly, \eqref{equa2} is well posed, and 
	has the same attractor as \eqref{equa}. 
	Moreover, there 
	exist two constants $M_0$ and $M_1$ such that, for every $\bu, \bv \in V$, 
	\begin{equation}\label{hp12}
	|B_{\theta}\bu|_{b}\leq M_0 , \qquad
	|B_{\theta}\bu - B_{\theta}\bv|_{b}\leq M_1 \|\bu - \bv \|_{b}.
	\end{equation}


	\subsection{Existence of Approximate Inertial Manifolds}\label{sec:existenceAIM}
	
	An Inertial Manifold (IM) $\mathcal{M}=\{\by,\Phi(\by) \}$ is a positively invariant 
	manifold defined as the graph of a Lipschitz function $\Phi$, defined from $P_n 
	H$ to $Q_n H$, which attracts all trajectories of \eqref{equa2} exponentially. 
	
	\noindent We briefly outline the Lyapunov Perron Method  (\cite{DD,DT,FMT88,FMRT01,Ro01,R95,SS04,Te97}) which will be used in our proof to construct an IM.}

We decompose equation \eqref{equa2} using the projections $P_n$ and $Q_n$ to obtain a solution in $\mathcal{M}$

\begin{eqnarray}
\frac{d\by}{dt}+A_{b\nu}\by +\eta \by &=&P_n B_{\theta}(\by+\Phi(\by))+P_n \f 
\label{sistinerz}\\
\frac{d\Phi(\by)}{dt}+ A_{b\nu}\Phi(\by) +\eta \Phi(\by)&=&Q_n
B_{\theta}(\by+\Phi(\by))+Q_n \f .\label{sistinerz2}
\end{eqnarray}
The finite dimensional system of ordinary 
differential equations \eqref{sistinerz} is called the inertial system 
associated to $\mathcal{M}$.  Given the initial condition  $\bu_{in}=\by_{in}+\Phi(\by_{in})$, since $\Phi$ is a Lipschitz function, then  for every
$t\in \mathbb{R}$, the equation  \eqref{sistinerz} determines    a unique $\by(t)=\by(t;\by_{in},\Phi)$.


Assuming that $\Phi$ is bounded, to determine the function $\Phi$,  
integrate system \eqref{sistinerz2} in time to obtain 

\begin{equation}
\Phi (\by_{in})=\int^{0}_{-\infty}e^{A_{b\nu} 
	s}[Q_{n}\left(B_{\theta}(\by(s)+\Phi (\by(s)))  +\f \right)-\eta \Phi(\by(s))]ds .
\end{equation}

The function $\Phi$ is the fixed point of 
the map $\phi \rightarrow \mathcal{F}\phi$ defined by
\begin{equation}\label{map2}
\mathcal{F}\phi (\by_{in})=\int^{0}_{-\infty}e^{A_{b\nu} 
	s}[Q_{n}\left(B_{\theta}(\by(s)+\phi (\by(s)))  +\f \right)-\eta \phi(\by(s))]ds ,
\end{equation}
where $\phi:P_n H \rightarrow Q_n H$ is a bounded Lipschitz function. The existence of an inertial manifold is achieved  by showing 
that the map $ \mathcal{F}$ is a contractive  map in the complete metric space
\begin{equation}\label{spa}
\mathcal{F}_{l,L}=\{\phi:P_nV \rightarrow Q_n V : \, \mathrm{Lip}
(\phi) \leq l, \, |\phi|_{\infty}=\sup_{\by \in P_n V}\|\phi (\by)\|_{b}\leq 
L \},
\end{equation}
and $\by$ is a solution of the system \eqref{sistinerz} with 
$\by(t=0)=\by_{in}$.
%
We recall that the proof of the existence of the Inertial Manifold 
is based on the spectral gap condition.

If the spectral gap condition is not verified, there is no standard  proof for the existence of the IM. 
However, it is possible to construct a 
sequence of Approximate Inertial Manifolds \cite{DD,DT,FMT88,FMRT01,Ro01,R95,SS04,Te97}. This is what we will do for equation \eqref{eq}. 

To obtain  the AIM we construct an   approximating sequence  of  solutions to  the system \eqref{sistinerz} as follows: 
let $\by_0  =\by_{in} \in P_{n}V$, and  $\tau>0$ be the discrete time step and define $\by_k$, $k\geq 0$, by the following Euler explicit discretization of \eqref{sistinerz}:
\begin{equation}\label{discr}
\frac{\by_{k+1}-\by_{k}}{-\tau}+A_{b\nu} 
\by_{k}=P_{n}B_{\theta}(\by_{k}+\phi(\by_{k}))-\eta \by_k+P_{n}\f,
\end{equation}
Fix  the positive integers $n$ and  $N$, to  construct 
the approximation  function $\by_{\tau}$ to $\by$:
\begin{eqnarray}
\quad \by_{\tau}(s)&=&\by_k \quad \mathrm{for} \quad -(k+1)\tau<s\leq
-k\tau ,\label{stepfunc1} \quad k=0,\dots,N-1 , 
\nonumber \\
\quad \by_{\tau}(s)&=&\by_N \quad \mathrm{for} \quad s\leq-N\tau .
\label{stepfunc2}
\end{eqnarray} 


The approximation $ \mathcal{F}^{N}_{\tau}$ of $\mathcal{F}$ is defined 
substituting $\by$ by  $\by_{\tau}$ in \eqref{map2}. Explicitly

\begin{eqnarray}
&&\mathcal{F}^{N}_{\tau}\phi (\by_{0})= \nonumber \\
&&-(A_{b\nu})^{-1}(I-e^{- A_{b\nu}})\sum_{k=0}^{N-1} e^{-k A_{b\nu} \tau} 
[Q_{n} \left(B_{\theta}(\by_{k} + \phi(\by_{k} ))+\f\right)-\eta 
\phi(\by_{k})]\nonumber
\\&&\qquad -(A_{b\nu})^{-1} e^{- N A_{b\nu} 
	\tau}[Q_{n}\left(B_{\theta}(\by_N 
+ \phi(\by_N
))+\f\right)-\eta \phi( \by_{N})]. \label{mapdiscr}
\end{eqnarray}
To obtain the family of AIM,  consider a sequence of positive numbers $\left( \tau_N \right)_{N\in \mathbb{N}}$ and 
define the manifolds $ \mathcal{M}_N$ as the graph of the functions $\Phi_N$ 
constructed recursively, for $N\geq 0$, by
\begin{equation}
\Phi_0 =0 , \quad \Phi_{N+1}=\mathcal{F}_{\tau_N}^{N}(\Phi_{N}). 
\label{costruz}
\end{equation} 
The main result of this section is to prove, for every $N\geq 0$, the existence of
$\Phi_N$ in $\mathcal{F}_{l,L}$. f
Before proceeding with the formulation of the main theorem of this section and 
its proof, we recall some preliminary properties, which guaranties the 
consistence of the approximation scheme described above.
To ease the notation in the sequel, we  denote $\tau_N$ by $\tau$. 
Write \eqref{discr} as
\begin{eqnarray}\label{richi1}
\qquad \qquad \by(-(k+1)\tau)&=&(I+ \tau A_{b\nu})\by (-k\tau)   \\
&& -\tau P_{n}\left(B_{\theta}(\by(-k\tau) +\bz(-k\tau))) 
+\f\right)+\tau \eta \by(-k\tau),\nonumber
\end{eqnarray}
and the approximation error 
\begin{equation}\label{approx_error}
\epsilon_k =\by(-(k+1)\tau)-\by(-k\tau)-\tau\frac{d\by}{dt}(-k\tau).
\end{equation}
The following Lemmas hold:
\begin{lemma}
	Suppose that $\bu(t)$ is a complete trajectory inside the global attractor 
	$\mathcal{A}$, then:
	\begin{equation}\label{hp7_1}
	\| \epsilon_k\|_{b}\leq \tau^{2} \beta_1 , \qquad k=0,\dots,N-1,
	\end{equation}
	\begin{equation}\label{hp7_2}
	\left\| \frac{d \bu}{dt} \right\|_{b}\leq  \beta_2 , \qquad t<0,
	\end{equation}
	with $\beta_1 \leq \frac{8}{\alpha^2}\rho_1 $ and $\beta_2\leq \frac{2 
		\rho_1}{\alpha}$, where $\alpha$ defines the domain of analyticity in \eqref{deriHV} and $\rho_1$ is the radius of the absorbing balls in $V$ in \eqref{assorbHV}.
\end{lemma}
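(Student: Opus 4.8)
The plan is to deduce both bounds from the analyticity estimates \eqref{deriHV}, which control the $V$-norms of the time derivatives of $\bu$ along the flow. The second inequality \eqref{hp7_2} is essentially the case $k=1$ of \eqref{deriHV}, while the first inequality \eqref{hp7_1} follows by recognizing $\epsilon_k$ as the local truncation (Taylor) error of the explicit Euler step \eqref{discr} and invoking the case $k=2$.

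For \eqref{hp7_2}, since $\bu(t)$ is a complete trajectory inside the global attractor, at each time $t<0$ the orbit lies in the absorbing balls of \eqref{assorbHV}, so \eqref{deriHV} applies with $k=1$ to give $\left\|d\bu/dt\right\|_b\leq 2\rho_1/\alpha$, whence $\beta_2\leq 2\rho_1/\alpha$. For \eqref{hp7_1}, first I would Taylor-expand $\by=P_n\bu$ about the node $-k\tau$. Since $\bu$ is analytic in time, so is $\by$, and Taylor's theorem with integral remainder yields
\begin{equation*}
\by(-(k+1)\tau)-\by(-k\tau)+\tau\frac{d\by}{dt}(-k\tau)=\int_{-k\tau}^{-(k+1)\tau}\bigl(-(k+1)\tau-s\bigr)\frac{d^2\by}{dt^2}(s)\,ds,
\end{equation*}
so that $\epsilon_k$ coincides with this remainder. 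Bounding $|-(k+1)\tau-s|\leq\tau$ on the integration interval of length $\tau$ gives
\begin{equation*}
\|\epsilon_k\|_b \leq \tau^2\sup_{s}\left\|\frac{d^2\by}{dt^2}(s)\right\|_b.
\end{equation*}
Because the eigenfunctions of $A_{b\nu}$ are orthogonal in both $H$ and $V$, the spectral projection $P_n$ is a contraction on $V$, so $\|d^2\by/dt^2\|_b=\|P_n\,d^2\bu/dt^2\|_b\leq\|d^2\bu/dt^2\|_b$. Applying \eqref{deriHV} with $k=2$ gives $\|d^2\bu/dt^2\|_b\leq 8\rho_1/\alpha^2$, and therefore $\|\epsilon_k\|_b\leq \tau^2(8\rho_1/\alpha^2)$, which is the claimed bound with $\beta_1\leq 8\rho_1/\alpha^2$.

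The step I expect to be the main obstacle is justifying that the derivative bounds \eqref{deriHV}, originally stated for the forward regime $t\geq 2\alpha$, hold at every point of the attractor and for all the negative times appearing in the discretization. The key is that $\mathcal{A}$ is invariant and consists of complete, bounded orbits: any value $\bu(-k\tau)\in\mathcal{A}$ is the ``present'' state of a trajectory that has evolved from $t=-\infty$ inside the absorbing balls, so the analyticity domain $\Delta$ can be recentered at each node and the estimates \eqref{assorbHV}--\eqref{deriHV} transfer uniformly along the orbit. Once this uniform-in-time validity on $\mathcal{A}$ is established, both bounds hold for every $k=0,\dots,N-1$ with constants independent of $k$ and $\tau$.
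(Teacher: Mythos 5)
Your argument is correct and is essentially the paper's own proof, only made explicit: the paper simply sets $\beta_1=\sup_{\mathcal{A}}\sup_{t}\|d^2\bu/dt^2\|_b$ and $\beta_2=\sup_{\mathcal{A}}\sup_{t}\|d\bu/dt\|_b$ and invokes \eqref{deriHV} with $k=2$ and $k=1$, leaving the Taylor-remainder computation, the projection step $\|d^2\by/dt^2\|_b\le\|d^2\bu/dt^2\|_b$, and the transfer of the forward-time estimates \eqref{assorbHV}--\eqref{deriHV} to complete orbits in $\mathcal{A}$ entirely implicit. The only point worth noting is that your Taylor identity carries $+\tau\,\frac{d\by}{dt}(-k\tau)$ whereas \eqref{approx_error} is written with a minus sign; your version is the one consistent with an $O(\tau^2)$ truncation error and with the claimed bound, so you have in effect read the definition modulo an evident sign typo.
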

\begin{proof}
	If the trajectory $\bu(t)$ is a complete trajectory inside the global attractor 
	$\mathcal{A}$, one can easy obtain \eqref{hp7_1} and \eqref{hp7_2}
	with 
	$$\beta_1= 
	\sup_{[\bu(t)]_{t\in\mathbb{R}}\in \mathcal{A}}
	\sup_{t\in\mathbb{R}}\left\|\frac{d^2 \bu}{dt^2}\right\|_b \; \qquad 
	\beta_2= 
	\sup_{[\bu(t)]_{t\in\mathbb{R}}\in
		\mathcal{A}} \sup_{t\in\mathbb{R}}\left\|\frac{d \bu}{dt}\right\|_b$$
	Using \eqref{deriHV} one derives the desired bounds on $\beta_1$ and $\beta_2$.
	  \end{proof}

\begin{lemma}
	Let be $i=1,2$, and let be $\by^{i}_{0}\in P_{n}V$. Define $\by^{i}_{k}$,
	$k=0,\dots,N$ by \eqref{discr} and \eqref{richi1} with $\by_0 =\by^{i}_{0}$
	and construct $\by^{i}_{\tau}(s)$ using
	\eqref{stepfunc1}. Then, for every $s\leq 0$,
	\begin{equation}\label{le2}
	\| \by^{1}_{\tau}(s)-\by^{2}_{\tau}(s) \|_{b}\leq e^{-s  [ \lambda_{n} + 
		\left( \frac{\bar{b} \nu_i}{\lambda_{n}} \right)^{-\frac{1}{2}} (M_1 +\Pi \bar{\eta}) (1+l)  ] } \| 
	\by_0^1 
	-\by_0^2 \|_b ,
	\end{equation}
	where 
	$\bar{\eta}=\sup_{\Omega} \eta$, and $M_1$ is given in \eqref{hp12}.
\end{lemma}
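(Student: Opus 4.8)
The plan is to derive a one–step growth estimate for the difference of the two discrete orbits and then iterate it. Write $\bw_k=\by^1_k-\by^2_k\in P_nV$. Subtracting the two copies of the recursion \eqref{richi1} (for $i=1$ and $i=2$), the forcing term $\f$ cancels and, reading the nonlinearity as in \eqref{discr} with $\bz=\phi(\by)$, one is left with
\begin{equation*}
\bw_{k+1}=(I+\tau A_{b\nu})\bw_k-\tau P_n\bigl[B_\theta(\by^1_k+\phi(\by^1_k))-B_\theta(\by^2_k+\phi(\by^2_k))\bigr]+\tau P_n(\eta\bw_k).
\end{equation*}
First I would take the $\|\cdot\|_b$ norm and bound the three pieces separately.

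For the linear piece I use \eqref{hp_4}, giving $\|(I+\tau A_{b\nu})\bw_k\|_b\le(1+\tau\lambda_n)\|\bw_k\|_b$. For the nonlinear piece, every term lies in $P_n$, so I first pass from the $V$-norm to the $H$-norm with the finite-dimensional bound \eqref{hp_5}, $\|P_n\bg\|_b\le(\bar{b}\nu_i/\lambda_n)^{-1/2}|\bg|_b$, and then apply the global Lipschitz estimate \eqref{hp12} for $B_\theta$ together with $\mathrm{Lip}(\phi)\le l$ from \eqref{spa}:
\begin{equation*}
|B_\theta(\by^1_k+\phi(\by^1_k))-B_\theta(\by^2_k+\phi(\by^2_k))|_b\le M_1\bigl(\|\bw_k\|_b+\|\phi(\by^1_k)-\phi(\by^2_k)\|_b\bigr)\le M_1(1+l)\|\bw_k\|_b.
\end{equation*}
For the $\eta$ term I again use \eqref{hp_5} to go to the $H$-norm, bound $|\eta\bw_k|_b\le\bar{\eta}|\bw_k|_b$, and finally invoke Poincar\'e \eqref{poincare}, $|\bw_k|_b\le\Pi\|\bw_k\|_b$, which is where the factor $\Pi\bar{\eta}$ appears. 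Collecting the three estimates yields
\begin{equation*}
\|\bw_{k+1}\|_b\le\Bigl(1+\tau\lambda_n+\tau(\bar{b}\nu_i/\lambda_n)^{-1/2}\bigl[M_1(1+l)+\Pi\bar{\eta}\bigr]\Bigr)\|\bw_k\|_b.
\end{equation*}

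To match the stated constant I bound $M_1(1+l)+\Pi\bar{\eta}\le(M_1+\Pi\bar{\eta})(1+l)$ (valid since $l\ge0$), set $C=\lambda_n+(\bar{b}\nu_i/\lambda_n)^{-1/2}(M_1+\Pi\bar{\eta})(1+l)$, and use $1+\tau C\le e^{\tau C}$ to obtain $\|\bw_{k+1}\|_b\le e^{\tau C}\|\bw_k\|_b$. A straightforward induction then gives $\|\bw_k\|_b\le e^{k\tau C}\|\by^1_0-\by^2_0\|_b$. Finally I translate this back to the step functions: by \eqref{stepfunc1}, for $-(k+1)\tau<s\le-k\tau$ one has $\by^i_\tau(s)=\by^i_k$ and $-s\ge k\tau$, so $e^{k\tau C}\le e^{-sC}$, and the same holds on $s\le-N\tau$ with $k=N$; this is exactly the claimed inequality \eqref{le2}.

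The only genuinely delicate point is the bookkeeping of the two norms: the Lipschitz bound for $B_\theta$ is naturally an $H$-estimate, whereas the conclusion is stated in the $V$-norm, so the finite-dimensional bound \eqref{hp_5} (carrying the factor $(\bar{b}\nu_i/\lambda_n)^{-1/2}$) must be applied to each projected term. One should also confirm that the $\eta\bw_k$ contribution is understood as $P_n(\eta\bw_k)$ so that \eqref{hp_5} applies; the Poincar\'e constant $\Pi$ then enters precisely as in the stated bound. Beyond this, the argument is a routine discrete Gr\"onwall-type iteration.
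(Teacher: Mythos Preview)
Your proof is correct and follows essentially the same route as the paper: subtract the two recursions, bound the linear part by \eqref{hp_4}, convert the projected nonlinear and $\eta$ terms from $H$ to $V$ via \eqref{hp_5}, apply the Lipschitz estimate \eqref{hp12} together with $\mathrm{Lip}(\phi)\le l$ and Poincar\'e, and then iterate the one-step bound using $1+\tau C\le e^{\tau C}$. The only cosmetic difference is that the paper groups the $\eta$ contribution with a factor $(1+l)$ from the outset, while you first obtain the sharper constant $M_1(1+l)+\Pi\bar{\eta}$ and then relax it to $(M_1+\Pi\bar{\eta})(1+l)$ to match the statement.
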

\begin{proof}
	Denoting by $\by_k =\by^{1}_{k}-\by^{2}_{k}$ and subtracting
	\eqref{discr} or \eqref{richi1} for $i=1,2$, 
	and
	using \eqref{hp12}, \eqref{hp_4} and the
	Lipschitz constant $l$ of $\phi$, we obtain
	\begin{eqnarray*}
		&& \| \by_{k+1} \|_b \leq (1+\tau \lambda_{n} ) \|\by_k \|_b + \tau 
		\left( \frac{\bar{b} \nu_i}{\lambda_{n}} \right)^{-\frac{1}{2}}
		\cdot \\ 
		&& \quad \cdot
		\left[ 
		|B_{\theta}(\by^{1}_{k}+ \phi(\by_{k}^{1}))-B_{\theta}(\by^{2}_{k}+ 
		\phi(\by_{k}^{2}))|_b + 
		\bar{\eta} |\by_{k}+ \phi(\by_{k}) |_b 
		\right]\\
		&& \leq (1+\tau \lambda_{n} ) \|\by_k \|_b +\tau
		\left( \frac{\bar{b} \nu_i}{\lambda_{n}} \right)^{-\frac{1}{2}} (M_1 +\Pi 
		\bar{\eta}) 
		(1+l) \|\by_k \|_b \\
		&& \leq \exp\{ k \tau [ \lambda_{n} + 
		\left( \frac{\bar{b} \nu_i}{\lambda_{n}} \right)^{-\frac{1}{2}} (M_1 +\Pi \bar{\eta}) (1+l)] \} 
		\| 
		\by_0 \|_b .
	\end{eqnarray*}
	for $k=0, \dots ,N$. From the definition of
	$\by_{\tau}^{i}(s)$ by \eqref{stepfunc1}, we obtain \eqref{le2}.
	  \end{proof}

In the sequel we use the notation
$\gamma=\int^{0}_{-\infty}|s|^{-1/2}e^{s}d s$. We are now ready to establish  the main theorem.
\begin{theorem}\label{primo}
	Suppose that the constants $\delta_1$ and $\delta_2$ satisfy
	\begin{equation}\label{hpteo1}
	(N+1) \tau \leq \frac{\delta_1}{ (M_1 +\Pi \bar{\eta})}
	\left( \frac{\bar{b} \nu_i}{\lambda_{n}}\right)^{\frac{1}{2}} ,
	\end{equation}
	and
	\begin{equation}\label{hpteo2}
	\lambda_{n} \geq \delta_2,
	\end{equation}
	then there exist $l$ and $L_0$ such that
	$\mathcal{F}_{\tau}^{N}:\mathcal{F}_{l,L}\rightarrow 
	\mathcal{F}_{l,L}$, for all $L\geq L_0$.
\end{theorem}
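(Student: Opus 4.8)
The plan is to check directly the two conditions that define $\mathcal{F}_{l,L}$: that the $V$-norm of $\mathcal{F}^{N}_{\tau}\phi$ does not exceed $L$ and that its Lipschitz constant does not exceed $l$, for an appropriate choice of $l$ and $L_0$. The observation underlying both estimates is that the discrete propagator in \eqref{mapdiscr} is, up to sign, the exact one-step quadrature of the semigroup, $(A_{b\nu})^{-1}(I-e^{-A_{b\nu}\tau})\,e^{-kA_{b\nu}\tau}Q_n=\int_{k\tau}^{(k+1)\tau}e^{-A_{b\nu}s}Q_n\,\D s$, so that the finite sum together with the tail term in \eqref{mapdiscr} amounts to applying $\int_0^{\infty}e^{-A_{b\nu}s}Q_n\,\D s$ to the piecewise-constant data built from the $\by_k$. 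Each contribution can then be controlled by the smoothing bound \eqref{hp_2}, which is exactly the $H\to V$ estimate we need, since the $\by_k$ live in $P_nV$ while the output must land in $Q_nV$.

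For the sup-norm bound I would estimate $\|\mathcal{F}^{N}_{\tau}\phi(\by_0)\|_b$ term by term. In each bracket the nonlinear part is bounded by $|B_{\theta}|_b\le M_0$ from \eqref{hp12}, the forcing by $|\f|_b$, and the damping part by $|\eta\phi(\by_k)|_b\le\bar\eta\,\Pi\,\|\phi(\by_k)\|_b\le\bar\eta\,\Pi\,L$ through Poincar\'e \eqref{poincare} and $\phi\in\mathcal{F}_{l,L}$; crucially, none of these depends on $\by_k$ in a growing way. Summing the operator norms against \eqref{hp_2} and comparing with $\int_0^{\infty}\bar b^{-1/2}\big((\nu_i s)^{-1/2}+\lambda_{n+1}^{1/2}\big)e^{-\lambda_{n+1}s}\,\D s=C_0$, where $C_0:=\bar b^{-1/2}\lambda_{n+1}^{-1/2}\big(\nu_i^{-1/2}\gamma+1\big)$ with $\gamma$ the constant fixed just before the theorem, gives
\[
\|\mathcal{F}^{N}_{\tau}\phi(\by_0)\|_b\le C_0\,(M_0+|\f|_b)+C_0\,\bar\eta\,\Pi\,L .
\]
Since $C_0$ decays like $\lambda_{n+1}^{-1/2}\le\lambda_n^{-1/2}$, hypothesis \eqref{hpteo2} with $\delta_2$ large enough makes the coefficient $C_0\bar\eta\Pi$ strictly less than $1$, and then $L_0:=C_0(M_0+|\f|_b)/(1-C_0\bar\eta\Pi)$ yields $\|\mathcal{F}^{N}_{\tau}\phi\|_b\le L$ for every $L\ge L_0$.

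For the Lipschitz bound I would take two data $\by_0^1,\by_0^2$, subtract the two expressions, and use that $B_{\theta}$ and $\phi$ are Lipschitz, so each bracket difference is at most $(M_1+\Pi\bar\eta)(1+l)\,\|\by_k^1-\by_k^2\|_b$; the discrete-trajectory estimate \eqref{le2} then bounds $\|\by_k^1-\by_k^2\|_b$ by $e^{k\tau\mu}\|\by_0^1-\by_0^2\|_b$ with $\mu=\lambda_n+(\bar b\nu_i/\lambda_n)^{-1/2}(M_1+\Pi\bar\eta)(1+l)$. Pairing $e^{k\tau\mu}$ with the factor $e^{-\lambda_{n+1}s}$ coming from \eqref{hp_2}, the leading term cancels because $\lambda_{n+1}\ge\lambda_n$, and what survives in the exponent is the residual rate $(\bar b\nu_i/\lambda_n)^{-1/2}(M_1+\Pi\bar\eta)(1+l)$ times the horizon $N\tau$; by \eqref{hpteo1} this product is at most $\delta_1(1+l)$, so the accumulated growth is bounded by the constant $e^{\delta_1(1+l)}$. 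The residual series $\tau\sum_k\big((\nu_i k\tau)^{-1/2}+\lambda_{n+1}^{1/2}\big)$ is then, again by \eqref{hpteo1}, dominated by a multiple of $\nu_i^{-1/2}(N\tau)^{1/2}+\lambda_{n+1}^{1/2}N\tau$, a quantity that tends to $0$ with $\delta_1$ (the $k=0$ term being treated separately, using that the kernel $(\nu_i s)^{-1/2}$ is singular but integrable on $[0,\tau]$). Collecting, $\mathrm{Lip}(\mathcal{F}^{N}_{\tau}\phi)\le(1+l)\,e^{\delta_1(1+l)}\,h(\delta_1)$ with $h(\delta_1)\to0$ as $\delta_1\to0$; one therefore fixes $l$ and then requires $\delta_1$ small enough in \eqref{hpteo1} that this right-hand side is $\le l$, which closes the invariance $\mathcal{F}^{N}_{\tau}(\mathcal{F}_{l,L})\subset\mathcal{F}_{l,L}$.

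The step I expect to be the main obstacle is precisely this last balance in the Lipschitz estimate. Because the eigenvalue asymptotics \eqref{asym_autovalore} give a spectral gap $\lambda_{n+1}-\lambda_n$ of order one only, whereas the trajectory growth rate $\mu$ eventually exceeds $\lambda_{n+1}$, no decay can be extracted from the gap itself: the argument must instead trade the uncontrolled $\sqrt{\lambda_n}$-growth for the harmless constant $e^{\delta_1(1+l)}$ by exploiting the finite time horizon encoded in \eqref{hpteo1}. Getting the bookkeeping of this cancellation right, together with the integrable but singular $k=0$ contribution, is the delicate part; the sup-norm estimate, by contrast, is a routine summation once \eqref{hpteo2} is in force.
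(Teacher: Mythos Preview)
Your plan is essentially the paper's: rewrite the discrete propagator as $\int e^{-A_{b\nu}s}Q_n\,\D s$ acting on piecewise-constant data, bound the sup-norm via \eqref{hp_2} and \eqref{hp12}, and for the Lipschitz constant combine \eqref{hp_2} with \eqref{le2}, trading the $\sqrt{\lambda_n}$-growth for $e^{\delta_1(1+l)}$ through the horizon bound \eqref{hpteo1}. Your diagnosis of the main obstacle is exactly right.

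Two points where you diverge from the paper are worth flagging. In the sup bound, you control $|\eta\phi(\by_k)|_b\le\bar\eta\Pi L$ and then close by making the coefficient $C_0\bar\eta\Pi<1$ via \eqref{hpteo2}; the paper instead writes $\bar\eta\rho_0$ and obtains an $L_0$ independent of $L$. Your route is the more careful one, since $\phi(\by_k)$ is an arbitrary element of $Q_nV$ with $\|\phi(\by_k)\|_b\le L$, not a point on the attractor; the paper's use of $\rho_0$ here is not really justified.

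There is, however, a small gap in your Lipschitz packaging. The sum over $k=0,\dots,N-1$ behaves as you say (and your bound $\lambda_{n+1}^{1/2}N\tau=O(\delta_1)$ is actually sharper than the paper's, which keeps only the factor $(\nu_i\lambda_{n+1}/\lambda_n)^{1/2}$). But the tail piece $(A_{b\nu})^{-1}e^{-NA_{b\nu}\tau}Q_n[\cdots]_N$ contributes, after the same exponential cancellation, a term of order $(M_1+\Pi\bar\eta)(1+l)e^{\delta_1(1+l)}\lambda_{n+1}^{-1/2}$, which does \emph{not} tend to $0$ with $\delta_1$ at fixed $n$; it is killed instead by taking $\delta_2$ large in \eqref{hpteo2}. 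So your function $h$ must depend on both parameters: fix any $l>0$, then take $\delta_2$ large and $\delta_1$ small. With that amendment the argument closes. The paper handles the same balance slightly differently, absorbing the non-vanishing piece into a specific choice $l=6\big(\tfrac12+\sup_n(\nu_i\lambda_{n+1}/\lambda_n)^{1/2}\big)$ and then taking $\delta_1\le\log(3/2)/l$.
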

\begin{proof}
	We show that the following constant $L_0$ and $l$ are appropriate
	\begin{equation}\label{L_0} L_0 = \bar{b}^{-1/2}(|\f|_b+M_0 +\bar{\eta} \rho_0)(\gamma 
	\nu_i^{-1/2}+1)\lambda_{n+1}^{-1/2}.\end{equation}
	\begin{equation}\label{l_lip}
	l=6 \left(\frac{1}{2}+  \sup_n \left(\frac{\nu_i\lambda_{n+1}}{\lambda_{n}}\right)^{\frac{1}{2}} \right) \quad \text{and} \quad
	\delta_1=\min \left(\delta_0, \frac{\log\left(3/2 \right)}{l} \right).
	\end{equation}
	Let $\phi \in \mathcal{F}_{l,L}$, suppose $\by_{0}\in P_{n}V$ and
	$(\by_k )_{k=0,\dots,N}$ and $\by_\tau (s)$ be given by
	\eqref{discr}, \eqref{stepfunc1}.

	Using \eqref{hp12},  \eqref{hp_2} and recalling that  by definition $\phi(y) \in Q_n V \subseteq V$, we 
	have:
	\begin{eqnarray}
	&& \qquad \left\| \mathcal{F}^{N}_{\tau}\phi (\by_{0}) \right\|_b \leq \\
	&&\leq \int^{0}_{-\infty}\left|e^{ A_{b\nu}s}Q_{n}\right|_{\mathcal{L}(H,V)}
	|\f+B_{\theta}(\by(s)+\phi (\by(s))) -\eta (\by(s)+\phi (\by(s)))|_b d s
	\nonumber\\
	&&\leq\bar{b}^{-1/2}(|\f|_b+M_0 +\bar{\eta} \rho_0 
	)\int^{0}_{-\infty}\left( |\nu_i s|^{-1/2}
	+\lambda_{n+1}^{1/2} \right)e^{\lambda_{n+1} s} d s \nonumber\\
	&&\leq \bar{b}^{-1/2}(|\f|_b+M_0 +\bar{\eta} \rho_0)(\gamma 
	\nu_i^{-1/2}+1)\lambda_{n+1}^{-1/2}. \nonumber
	\end{eqnarray}

	From the previous inequality we deduce that $\left\| 
	\mathcal{F}^{N}_{\tau}\phi (\by_{0}) \right\|_b \leq L$, for every $L\geq L_0$ , where $L_0$ was defined by \eqref{L_0}.
	Now, we  show that  $l$ is our Lipschitz constant.
	For this scope, let $\by^i_0 \in P_{n}V$ and $(\by^i_k )_{k=0,\dots,N}$ and
	$\by^i_\tau (s)$ constructed by \eqref{discr} and
	\eqref{stepfunc1}, for $i=1,2$. Therefore, write

	\begin{eqnarray}
	&&\mathcal{F}^{N}_{\tau}\phi (\by^1_{0})-\mathcal{F}^{N}_{\tau}\phi
	(\by^2_{0})=\int^{0}_{-(N+1)\tau}e^{A_{b\nu}s}[ Q_{n} ( 
	B_{\theta}(\by^1_{\tau}(s)+\phi (\by^1_{\tau}(s)))
	\nonumber
	\\ &&\quad-B_{\theta}(\by^2_{\tau}(s)+\phi (\by^2_{\tau}(s))) ) -\eta (\phi 
	(\by^1_{\tau}(s))- \phi (\by^2_{\tau}(s)) )] d s
	\nonumber\\
	&&\qquad +(A_{b\nu})^{-1} e^{-(N+1)A_{b\nu} \tau} [Q_{n}(B_{\theta}(\by^1_N +
	\phi(\by^1_N ))\nonumber \\ &&\qquad\qquad- B_{\theta}(\by^2_N +
	\phi(\by^2_N ))) -\eta (\phi (\by^1_{N})- \phi (\by^2_{N}) )].
	\end{eqnarray}
	Using again \eqref{hp12}, \eqref{hp12} and \eqref{hp_2}, we have:
	\begin{eqnarray}
	&&\left\| \mathcal{F}^{N}_{\tau}\phi(\by^1_{0})
	-\mathcal{F}^{N}_{\tau}\phi (\by^2_{0}) \right\|_b \leq  \bar{b}^{-\frac{1}{2}} ( M_1 +\Pi \bar{\eta}) (l+1) \cdot \nonumber \\ 
	&& \qquad \cdot \int^{0}_{-(N+1)\tau}\left( |\nu_i s|^{-1/2} 
	+\lambda_{n+1}^{1/2} \right) 
	e^{\lambda_{n+1} s} \| \by^1_{\tau}(s)-\by^2_{\tau}(s)\|_b
	d s \nonumber\\
	&& + \bar{b}^{-\frac{1}{2}} ( M_1 +\Pi\bar{\eta}) (l+1)
	\nu_i^{-1/2}\lambda_{n+1}^{-1/2}e^{-\lambda_{n+1} (N+1)\tau}\|
	\by^1_{N}-\by^2_{N}\|_b \nonumber .
	\end{eqnarray}
	Using \eqref{le2}, since $\lambda_{n+1} -\lambda_{n} \geq0$, using \eqref{hpteo1}, we obtain 
	\begin{equation}
	\left\| \mathcal{F}^{N}_{\tau}\phi
	(\by^1_{0})-\mathcal{F}^{N}_{\tau}\phi (\by^2_{0}) \right\|_b \leq
	\Xi \|\by_0^1 -\by_0^2 \|_b, 
	\end{equation}
	with
	\begin{eqnarray}
	&& \Xi = \bar{b}^{-\frac{1}{2}} (l+1) e^{\delta_1 (l+1)} 
	\left[ 2\left(M_1+\Pi\bar{\eta}\right)^{\frac{3}{2}}
	\left(\frac{\bar{b}}{\nu_i\lambda_{n}}\right)^{\frac{1}{4}} +\right. 
	\label{hpteo3} 
	\\
	&& \qquad \qquad \left.+ (M_1+\Pi \bar{\eta})^2 (\nu_i 
	\lambda_{n+1})^{-\frac{1}{2}}\right]+  e^{\delta_1
		(l+1)}\left(\frac{\nu_i\lambda_{n+1}}{ \lambda_{n}}\right)^{\frac{1}{2}} . 
	\nonumber
	\end{eqnarray}
	We now choose $\delta_1$ and $\delta_2$ to ensure that $\Xi\leq l$ then the proof of the theorem will be complete.
	First choose $\delta_0 >0$, with $\delta_1 \leq \delta_0$, and  choose $\delta_2$ in \eqref{hpteo2} sufficiently large so that
	\begin{eqnarray}
	&&\bar{b}^{-\frac{1}{2}}\left[ 2\left(M_1+\Pi\bar{\eta}\right)^{\frac{3}{2}}
	\left(\frac{\bar{b}}{\nu_i\lambda_{n}}\right)^{\frac{1}{4}} 
	+ (M_1+\Pi \bar{\eta})^2 (\nu_i 
	\lambda_{n+1})^{-\frac{1}{2}}\right]  e^{\delta_1 } \leq \nonumber \\
	&& \quad \leq \bar{b}^{-\frac{1}{2}}\left[ 2\left(M_1+\Pi\bar{\eta}\right)^{\frac{3}{2}}
	\left(\frac{\bar{b}}{\nu_i\delta_2}\right)^{\frac{1}{4}} 
	+ (M_1+\Pi \bar{\eta})^2 (\nu_i 
	\delta_2)^{-\frac{1}{2}}\right]  e^{\delta_0 } \leq \nonumber\\ &&\quad \leq \frac{1}{2} .
	\end{eqnarray}
	Therefore, with this choice of $\delta_0$, $\delta_2$ and $\delta_1 \leq \delta_0$ we have
	\begin{equation}
	\Xi\leq  \left( \frac{l}{2} + \frac{1}{2}+  \sup_n \left(\frac{\nu_i\lambda_{n+1}}{\lambda_{n}}\right)^{\frac{1}{2}}  \right) e^{l \delta_1 } \leq l,
	\end{equation}
	by choosing $l$ as defined in \eqref{l_lip}  at the beginning of the Theorem. This completes the proof of the Theorem. 
	
\end{proof}



\subsection{Approximation of the attractor}\label{sec:approximationAIM}

In this section we prove that the approximate inertial manifolds 
$\mathcal{M}_N$ built
in the previous section as a graph of the $\Phi_N$, approximates
the global attractor $\mathcal{A}$. 

We first try to estimate the semi-distance in $V$ of $\mathcal{A}$ to 
$\mathcal{M}_N$
\begin{equation}\label{dist}
\varrho_N = d_V (\mathcal{A} , \mathcal{M}_N )= \sup_{\bv \in
	\mathcal{A}} \inf_{\bw \in \mathcal{M}_N } \|\bv -\bw \|_b.
\end{equation}
\noindent We continue to use the notations of the previous sections.
\begin{lemma} Let be $\bu_0 \in \mathcal{A}$ and let be $\by_0=P_n \bu_0$ and  $\bz_0=Q_n\bu_0$, with $P_n$ and $Q_n=I-P_n$ the projection operators defined in \eqref{oper_proj}.
	Suppose that \eqref{hp7_1}, \eqref{hp7_2}, \eqref{hpteo1} and  
	\eqref{hpteo2} are satisfied. Then for every $\phi \in 
	\mathcal{F}_{l,L}$ results that
	\begin{eqnarray}\label{spessore}
	&&\|\mathcal{F}_{N}^{\tau}\phi (\by_0 )-\bz_0 \|_b \leq \nonumber\\
	&& \leq \bar{b}^{-\frac{1}{2}} (M_1+\Pi \bar{\eta})  
	\left[ l (\lambda_{n}\nu_i)^{-\frac{1}{2}} + \frac{(\gamma 
		\nu_i^{-\frac{1}{2}} +1)}{\lambda_{n+1}^{\frac{1}{2}}} \right] 
	\sup_{\by +\bz \in \mathcal{A}}\|\phi(\by) - \bz\|_b 
	\nonumber \\
	&& \quad +\left[ \beta_1 l \lambda_{n}^{-1} + \beta_2 \bar{b}^{-\frac{1}{2}} (M_1+\Pi\bar{\eta}) (1+l) 
	\frac{(\gamma \nu_i^{-\frac{1}{2}} + 1)}{ \lambda_{n+1}^{\frac{1}{2}}} 
	\right] 
	\tau  
	\nonumber \\
	&&\quad + 2\bar{b}^{-\frac{1}{2}} 
	(M_0+\bar{\eta}\rho_0) 
	\frac{[\nu_i(N+1)\tau]^{-\frac{1}{2}}+\lambda_{n+1}^{\frac{1}{2}}}{\lambda_{
			n+1}} e^{
		-\lambda_{n+1}(N+1) \tau} .
	\end{eqnarray}
\end{lemma}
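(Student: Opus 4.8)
The plan is to exhibit the exact $Q_n$-component $\bz_0 = Q_n\bu_0$ through the very Lyapunov--Perron integral that defines $\mathcal{F}$, and then to estimate, line by line, its distance from the discrete truncation $\mathcal{F}_N^\tau\phi(\by_0)$. Since $\bu_0$ lies on a complete bounded trajectory $\bu(s)=\by(s)+\bz(s)$ of \eqref{equa2} inside $\mathcal{A}$, with $\by(s)=P_n\bu(s)$ and $\bz(s)=Q_n\bu(s)$, projecting \eqref{equa2} by $Q_n$ and integrating backward from $-\infty$ exactly as in the derivation of \eqref{map2} yields
\[
\bz_0 = \int_{-\infty}^0 e^{A_{b\nu}s}\left[Q_n\left(B_\theta(\bu(s))+\f\right)-\eta\bz(s)\right]ds.
\]
I would then read $\mathcal{F}_N^\tau\phi(\by_0)$ from \eqref{mapdiscr} as the Riemann sum of the analogous integral $\int_{-\infty}^0 e^{A_{b\nu}s}[Q_n(B_\theta(\by_\tau(s)+\phi(\by_\tau(s)))+\f)-\eta\phi(\by_\tau(s))]ds$, truncated at time $-(N+1)\tau$, the residual tail being carried by the remainder term built on $\by_N$, and subtract the two expressions.

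The difference then splits into three groups, matching the three lines of \eqref{spessore}. First, on $[-(N+1)\tau,0]$ I replace the exact integrand by the discrete one: the forcing $Q_n\f$ cancels, and the surviving differences are handled by the bounds of \eqref{hp12}, namely $|B_\theta(\bu)-B_\theta(\by_\tau+\phi(\by_\tau))|_b\le M_1\|\bu-\by_\tau-\phi(\by_\tau)\|_b$, together with $|\eta(\bz-\phi(\by_\tau))|_b\le\bar{\eta}|\bz-\phi(\by_\tau)|_b\le\Pi\bar{\eta}\|\bz-\phi(\by_\tau)\|_b$ via the Poincar\'e inequality \eqref{poincare}. The quantity $\|\bu-\by_\tau-\phi(\by_\tau)\|_b$ is controlled by $\sup_{\by+\bz\in\mathcal{A}}\|\phi(\by)-\bz\|_b$ plus the trajectory error $\|\by(s)-\by_\tau(s)\|_b$; carrying out the $s$-integration with the smoothing estimate \eqref{hp_2} produces the factor $(\gamma\nu_i^{-1/2}+1)\lambda_{n+1}^{-1/2}$, while the embedding bound \eqref{hp_5} and the Lipschitz constant $l$ of $\phi$ produce the $l(\lambda_n\nu_i)^{-1/2}$ contribution, giving the first line of \eqref{spessore}.

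Second, the gap between the continuous integral and its Euler--Riemann discretization on $[-(N+1)\tau,0]$ is the consistency error, into which the local truncation errors $\epsilon_k$ and the derivative $d\bu/dt$ enter; bounding them by $\tau^2\beta_1$ and $\beta_2$ through Lemma 1 (\eqref{hp7_1}--\eqref{hp7_2}) and summing with the aid of \eqref{hp_2} and \eqref{hp_5} yields the $\tau$-proportional second line. Third, the tails $\int_{-\infty}^{-(N+1)\tau}$ of $\bz_0$ and of $\mathcal{F}_N^\tau\phi(\by_0)$ remain; again $Q_n\f$ cancels (the force being stationary), the residual $B_\theta$ and $\eta$ contributions are bounded by $M_0+\bar{\eta}\rho_0$ through \eqref{hp12} and \eqref{assorbHV}, and integrating $((\nu_i|s|)^{-1/2}+\lambda_{n+1}^{1/2})e^{\lambda_{n+1}s}$ over $(-\infty,-(N+1)\tau]$ gives exactly the exponentially small third line, with the factor $2$ accounting for the two tail contributions.

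The main obstacle is the control of the trajectory error $\|\by(s)-\by_\tau(s)\|_b$, that is, showing that the discrete inertial orbit $\by_\tau$ generated with the approximate closure $\phi$ stays close to the true projected orbit $\by=P_n\bu$, both issuing from $\by_0=P_n\bu_0$. This is a discrete Gronwall argument in the spirit of Lemma 2, driven by the closure discrepancy $\|\phi(\by_k)-Q_n\bu(-k\tau)\|_b$ and the consistency errors $\epsilon_k$. The delicate point is that the inertial dynamics amplify the error at rate $\lambda_n$, and this growth must be absorbed: the decay $e^{\lambda_{n+1}s}$ of $e^{A_{b\nu}s}Q_n$ in \eqref{hp_2} dominates it because $\lambda_{n+1}\ge\lambda_n$, so that the residual growth is governed only by $(\bar{b}\nu_i/\lambda_n)^{-1/2}(M_1+\Pi\bar{\eta})(1+l)$ and is kept of order one by hypothesis \eqref{hpteo1}, which bounds $(N+1)\tau$. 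Keeping this bookkeeping clean --- so that $\sup_{\by+\bz\in\mathcal{A}}\|\phi(\by)-\bz\|_b$ factors out of the first line and the $\tau$- and exponential-tail terms separate exactly as displayed in \eqref{spessore} --- is where the real care is needed.
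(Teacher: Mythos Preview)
Your proposal is correct and follows essentially the same approach as the paper's own proof: represent $\bz_0$ by the Lyapunov--Perron integral along the complete trajectory $\bu(s)=\by(s)+\bz(s)$ in $\mathcal{A}$, subtract the analogous integral with $\by_\tau+\phi(\by_\tau)$ that defines $\mathcal{F}_N^\tau\phi(\by_0)$, and split into (i) the integrand difference on $[-(N+1)\tau,0]$, which carries the closure error $\sup_{\by+\bz\in\mathcal{A}}\|\phi(\by)-\bz\|_b$ and the trajectory error $\|\by-\by_\tau\|_b$, and (ii) the two tails beyond $-(N+1)\tau$, bounded crudely by $M_0+\bar\eta\rho_0$. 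The trajectory error is then handled exactly as you indicate, via a discrete Gronwall for $\be_k=\by_k-\by(-k\tau)$ driven by $\epsilon_k$ and by the closure discrepancy, with the exponential growth at rate $\lambda_n$ absorbed by the $e^{\lambda_{n+1}s}$ decay of the semigroup and the residual controlled through \eqref{hpteo1}.

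One small clarification of terminology: there is no separate ``Euler--Riemann discretization error'' of the $s$-integral, since $\mathcal{F}_N^\tau\phi(\by_0)$ is by definition the \emph{exact} integral $\int_{-\infty}^0 e^{A_{b\nu}s}[\cdots\by_\tau(s)\cdots]\,ds$ (the sum \eqref{mapdiscr} is its exact value for the piecewise-constant $\by_\tau$). What you label the ``second group'' is really just the trajectory-error part of the integrand difference, and the $l(\lambda_n\nu_i)^{-1/2}$ term in the first line of \eqref{spessore} arises not directly from \eqref{hp_5} but from the closure-driven portion of the Gronwall bound on $\|\be_k\|_b$. This is only a matter of labeling; the analysis you outline is the right one.
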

\begin{proof}
	Take $\phi \in \mathcal{F}_{l,L}$ and $\bu_0 = \by_0 +\bz_0 \in
	\mathcal{A}$ 
	a point in the global attractor. Denote with $(\bu (t)
	)_{t\in \mathbb{R}}$ the trajectory in $\mathcal{A}$ which pass through 
	$\bu_0$ at $t=0$. Consider $\by(t)=P_{n}\bu(t)$, $\bz(t)=Q_{n}\bu(t)$.
	Define $\widetilde{\by}_k =\by(-k\tau)$ and $(\by_k
	)_{k=0,\dots,N}$ with \eqref{discr}; and consider $\by_\tau$ constructed by 
	\eqref{stepfunc1}. 
	Using \eqref{hp12} and \eqref{hp_2}, the Lipschitz property of
	$\phi$, the Poincar\`e inequality \eqref{poincare} and \eqref{assorbHV}, we 
	have:
	\begin{eqnarray}\label{riciam}
	&&\|\mathcal{F}_{N}^{\tau}\phi (\by_0 )-\bz_0 \|_b \leq 
	\bar{b}^{-\frac{1}{2}} (M_1+\Pi \bar{\eta}) (1+l)
	\cdot \\
	&& \qquad \qquad \cdot 
	\int_{-(N+1)\tau}^{0}(|\nu_i 
	s|^{-\frac{1}{2}}+\lambda_{n+1}^{\frac{1}{2}})e^{\lambda_{n+1} s}  \| 
	\by_{\tau}(s) -\by(s) 
	\|_b ds\nonumber \\
	&& \qquad + (M_1+\Pi \bar{\eta}) \bar{b}^{-\frac{1}{2}} (\gamma 
	\nu_i^{-\frac{1}{2}} +1) \lambda_{n+1}^{-1}
	\sup_{\by +\bz \in \mathcal{A}} \|\phi(\by) - \bz\|_b  \nonumber \\
	&&\qquad + 2\bar{b}^{-\frac{1}{2}} (M_0+\bar{\eta} \rho_0)
	\frac{[\nu_i(N+1)\tau]^{-\frac{1}{2}}+\lambda_{n+1}^{\frac{1}{2}}}{\lambda_{
			n+1}} e^{
		-\lambda_{n+1}(N+1) \tau} .\nonumber
	\end{eqnarray}
	To estimate the integral on \eqref{riciam}, from \eqref{hp7_2}, for every\\ $s$ 
	in 
	$(-(k+1)\tau, -k\tau]$, we have
	\begin{equation}
	\| \by_\tau (s) -\by(s)\|_b  \leq \|\be_k \|_b 
	+ |k\tau +s | \sup_{\zeta \leq0}\|\frac{d\by}{dt}(\zeta)\|_b \leq  \|\be_k \|_b 
	+\tau \beta_2, \nonumber
	\end{equation}
	with $\be_k = \by_k-\widetilde{\by}_k$.
	Using \eqref{richi1} and \eqref{approx_error}, we have
	\begin{eqnarray}
	\|\be_{k+1}\|_b &\leq& (1+\tau \lambda_n ) \|\be_k \|_b + 
	\tau \left(\frac{\bar{b}  \nu_i}{\lambda_{n}}\right)^{-1/2} (M_1+\Pi \bar{\eta}) 
	(1+l) 
	\|\be_k \|_b
	\nonumber \\
	&& + \tau \left(\frac{\bar{b}  \nu_i}{\lambda_{n}}\right)^{-1/2} (M_1 +\Pi 
	\bar{\eta}) 
	\| \phi(\widetilde{\by}_k)-\bz(-k\tau)\|_b + \|\epsilon_k \|_b, \nonumber 
	\end{eqnarray}
	and from \eqref{hp7_1}, we have
	\begin{eqnarray}
	\|\be_{k}\|_b &\leq& [ (M_1 + \Pi \bar{\eta}) 
	\left( \bar{b}  \bar{\nu }\lambda_{n} \right)^{-\frac{1}{2}} 
	\sup_{\by +\bz \in \mathcal{A}} \| \phi(\by)-\bz\|_b + \tau \beta_1 
	\lambda_{n}^{-1}] \nonumber\\
	&&\quad \cdot   \exp \{ k \tau[\lambda_{n} +
	\left(\frac{\bar{b}\nu_i}{\lambda_{n}}\right)^{-\frac{1}{2}} (M_1+\Pi\bar{\eta}) 
	(1+l) 
	] \}.
	\nonumber
	\end{eqnarray}
	Now we are ready to estimate the first integral on \eqref{riciam}:
	\begin{eqnarray}
	&&\bar{b}^{-1/2} (M_1+\Pi \bar{\eta}) (1+l)  \cdot \nonumber \\
	&& \qquad \qquad \cdot 
	\int_{-(N+1)\tau}^{0}(|\nu_i 
	s|^{-1/2}+\lambda_{n+1}^{1/2})e^{\lambda_{n+1} s} 
	\| \by_{\tau}(s)-\by(s) \|_b ds \nonumber \\
	&& \leq 
	l [ (M_1+\Pi \bar{\eta})  
	\left( \bar{b}  \lambda_{n} \nu_i \right)^{-\frac{1}{2}}
	\sup_{\by +\bz \in \mathcal{A}} \| \phi(\by)-\bz\|_b + \tau \beta_1  
	\lambda_{n}^{-1}] \nonumber \\
	&& 
	\qquad 
	+ \tau \beta_2 \bar{b}^{-\frac{1}{2}} (M_1+ \Pi \bar{\eta}) 
	(1+l) (\gamma \nu_i^{-\frac{1}{2}}+ 1) \lambda_{n+1}^{-\frac{1}{2}} . 
	\nonumber
	\end{eqnarray}
	Combining the previous estimate with \eqref{riciam}, we have
	\begin{eqnarray}
	&&\|\mathcal{F}_{N}^{\tau}\phi (\by_0 )-\bz_0 \|_b \leq \nonumber \\
	&\leq& l[ (M_1+\Pi \bar{\eta}) 
	\left( \bar{b}  \nu_i \lambda_{n} \right)^{-\frac{1}{2}}
	\sup_{\by +\bz \in \mathcal{A}} \| \phi(\by)-\bz\|_b + \tau \beta_1 
	\lambda_{n}^{-1}] \nonumber \\
	&& \quad 
	+ \tau \beta_2 \bar{b}^{-\frac{1}{2}} (M_1+\Pi \bar{\eta}) 
	(1+l) (\gamma \nu_i^{-\frac{1}{2}}+ 1)
	\lambda_{n+1}^{-1/2}  \nonumber \\
	&& \quad 
	+ ( M_1+\Pi \bar{\eta}) 
	\bar{b}^{-\frac{1}{2}} (\gamma \nu_i^{-\frac{1}{2}} +1) 
	\lambda_{n+1}^{-\frac{1}{2}}
	\sup_{\by +\bz \in \mathcal{A}} \|\phi(\by) - \bz\|_b  \nonumber \\
	&&\quad + 2 \bar{b}^{-\frac{1}{2}} (M_0+\bar{\eta}\rho_0)
	\frac{[\nu_i(N+1)\tau]^{-\frac{1}{2}}+\lambda_{n+1}^{1/2}}{\lambda_{n+1}}e^
	{ - 
		\lambda_{n+1}(N+1) \tau} ,
	\end{eqnarray}
	which is 
	the \eqref{spessore}.
	  \end{proof}

In the next theorem we give 
an estimate on the number $n$ of modes to yield  an exponential 
approximation of $\mathcal{M}_N$ of the attractor, for $N$ large.

\begin{theorem}
	Suppose that the hypothesis \eqref{hpteo1} and \eqref{hpteo2} of Theorem 1 hold and that \eqref{hp7_1} and 
	\eqref{hp7_2} of Lemma 1 hold. 
	Assume, moreover, that the sequence $\tau_N$ satisfies 
	\begin{equation}
	\chi \leq \tau_N (N+1) \leq \frac{\delta_1}{(M_1+\Pi \bar{\eta})}\left(
	\frac{\bar{b} \nu_i}{\lambda_n} \right)^{1/2} , \label{teo2.2}
	\end{equation}
	for all  $N\in \mathbb{N}$; where $\chi$ is any fixed constant less then $\delta_1$.
	There exist a constant $\delta_3$ such that if $n$ is fixed by 
	\begin{equation}
	\lambda_n \geq \max\left(\delta_2,\delta_3\right), 
	\end{equation}
	then the approximate inertial manifolds 
	$\mathcal{M}_N$, constructed in Theorem 1, satisfy  
	\begin{equation}\label{teo2.4}
	d_V (\mathcal{A},\mathcal{M}_N) \leq  4 \bar{b}^{-1/2} 
	(M_0+\bar{\eta}\rho_0) \frac{1}{
		\lambda_{n+1}^{1/2}}e^{- \lambda_{n+1} \chi } ,
	\end{equation}
	for $N$ sufficiently large.
\end{theorem}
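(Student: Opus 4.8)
The plan is to bound the $V$-semidistance \eqref{dist} by the ``thickness'' of the neighbourhood carried by the graph of $\Phi_N$, and then to control that thickness through the recursion \eqref{costruz} together with the estimate \eqref{spessore}. Concretely, for $\bu_0=\by_0+\bz_0\in\mathcal{A}$ (with $\by_0=P_n\bu_0$, $\bz_0=Q_n\bu_0$) the point $\by_0+\Phi_N(\by_0)$ lies on $\mathcal{M}_N$, so
\begin{equation}
\inf_{\bw\in\mathcal{M}_N}\|\bu_0-\bw\|_b\leq\|\bz_0-\Phi_N(\by_0)\|_b,\nonumber
\end{equation}
and taking the supremum over $\mathcal{A}$ gives $d_V(\mathcal{A},\mathcal{M}_N)\leq d_N$, where I set $d_N=\sup_{\by_0+\bz_0\in\mathcal{A}}\|\Phi_N(\by_0)-\bz_0\|_b$. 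It therefore suffices to estimate $d_N$ for $N$ large.

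Next I would set up a recursion for $d_N$. By \thmref{primo} together with $\Phi_0=0\in\mathcal{F}_{l,L}$, an induction shows $\Phi_N\in\mathcal{F}_{l,L}$ for every $N$ (here the hypotheses \eqref{hpteo1}, \eqref{hpteo2} are in force, and \eqref{teo2.2} respects \eqref{hpteo1}). Applying \eqref{spessore} with $\phi=\Phi_N$, $\tau=\tau_N$ and the given number of steps $N$, and recalling from \eqref{costruz} that $\Phi_{N+1}=\mathcal{F}^{N}_{\tau_N}(\Phi_N)$ is exactly the map on the left of \eqref{spessore}, the $\sup$-term on the right of \eqref{spessore} is precisely $d_N$, so that
\begin{equation}
d_{N+1}\leq a_n\,d_N+b_N+c_N,\nonumber
\end{equation}
where $a_n=\bar{b}^{-1/2}(M_1+\Pi\bar{\eta})[\,l(\lambda_n\nu_i)^{-1/2}+(\gamma\nu_i^{-1/2}+1)\lambda_{n+1}^{-1/2}\,]$ is the coefficient of the $\sup$-term, $b_N$ is the $\tau_N$-term, and $c_N$ is the last, exponentially small term of \eqref{spessore}. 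The decisive structural point is that $a_n$ does not depend on $N$; moreover $l$ in \eqref{l_lip} is a finite constant since $\lambda_n\sim n$ makes $\sup_n(\nu_i\lambda_{n+1}/\lambda_n)^{1/2}$ finite, whence $a_n\to0$ as $\lambda_n\to\infty$.

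I would then fix $\delta_3$ so large that, whenever $\lambda_n\geq\max(\delta_2,\delta_3)$, two things hold: (i) $a_n\leq\tfrac14$, and (ii) $\lambda_{n+1}^{1/2}\geq10\,[\nu_i\chi]^{-1/2}$, so that the subleading part of $c_N$ is negligible. Using the left inequality $(N+1)\tau_N\geq\chi$ of \eqref{teo2.2} one has $e^{-\lambda_{n+1}(N+1)\tau_N}\leq e^{-\lambda_{n+1}\chi}$ and $[\nu_i(N+1)\tau_N]^{-1/2}\leq[\nu_i\chi]^{-1/2}$, whence, writing $T$ for the right-hand side of \eqref{teo2.4},
\begin{equation}
c_N\leq 2\bar{b}^{-1/2}(M_0+\bar{\eta}\rho_0)\big(\lambda_{n+1}^{-1/2}+[\nu_i\chi]^{-1/2}\lambda_{n+1}^{-1}\big)e^{-\lambda_{n+1}\chi}\leq \tfrac{11}{20}\,T.\nonumber
\end{equation}
On the other hand $b_N\to0$ as $N\to\infty$, because the right bound in \eqref{teo2.2} forces $\tau_N\leq C/(N+1)\to0$ for a fixed constant $C$. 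Unrolling the recursion from $\Phi_0=0$ gives $d_{N+1}\leq a_n^{\,N+1}d_0+\sum_{k=0}^{N}a_n^{\,N-k}(b_k+c_k)$, where $d_0\leq\rho_1$ by \eqref{assorbHV}. Letting $N\to\infty$, the first term vanishes, the $b_k$-contribution vanishes (a geometric average of a null sequence), and the $c_k$-contribution is at most $\tfrac{11}{20}T\sum_{j\geq0}a_n^{\,j}=\tfrac{11}{20}T\,(1-a_n)^{-1}\leq\tfrac{11}{15}T<T$. Hence $\limsup_N d_N<T$, and \eqref{teo2.4} follows for $N$ sufficiently large.

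The main obstacle is the bookkeeping of this non-autonomous recursion: since each step uses a different $\tau_N$, the driving terms $b_N,c_N$ vary with $N$, so one cannot simply invoke a fixed-point identity but must run the geometric summation and pass to the limit. The delicate quantitative point is to choose $\delta_3$ so that the contraction factor $a_n$ is small and the subleading part of $c_N$ is small relative to $\lambda_{n+1}^{-1/2}$, in such a way that the amplification factor $(1-a_n)^{-1}$, combined with the factor $\tfrac12$ lost in passing from $T$ to the leading part of $c_N$, still lands comfortably below the prefactor $4$ in \eqref{teo2.4}.
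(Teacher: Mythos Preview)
Your proposal is correct and follows essentially the same route as the paper: both derive from \eqref{spessore} a linear recursion $\varrho_{N+1}\leq\mu\,\varrho_N+\sigma_N$ (your $a_n,b_N,c_N$ are the paper's $\mu$ and the two pieces of $\sigma_N$), iterate it geometrically, fix $\delta_3$ so that the contraction factor is at most $\tfrac12$ (you take $\tfrac14$ plus an extra largeness condition to absorb the $[\nu_i\chi]^{-1/2}$ term), and pass to the limit using $\tau_N\to0$. Your treatment of the $b_k$-contribution via the weighted-average argument is in fact a bit more careful than the paper's, which bounds it by $\sup_{0\leq k\leq N-1}\tau_k$; otherwise the arguments coincide.
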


\begin{proof}
	
	Using the expression \eqref{spessore} in the previous Lemma, we have 
	$\varrho_{N+1} \leq \mu \varrho_N + \sigma_N $, where
	\begin{equation}
	\mu =\bar{b}^{\frac{1}{2}} (M_1+\Pi \bar{\eta}) \left[ l 
	(\lambda_n\nu_i)^{-\frac{1}{2}} + 
	(\gamma \nu_i^{-\frac{1}{2}} +1)\lambda_{n+1}^{-\frac{1}{2}} \right]  ,
	\end{equation}
	and
	\begin{eqnarray}
	\sigma_N &=& \tau_N \left[ \beta_1 l \lambda_n^{-1} + \beta_2 
	\bar{b}^{-\frac{1}{2}} (M_1+\Pi\bar{\eta}) (1+l)\frac{(\gamma 
		\nu_i^{-\frac{1}{2}} + 1)}{ \lambda_{n+1}^{\frac{1}{2}}} \right]  
	\nonumber \\
	&&\hskip-0.5cm+ 2\bar{b}^{-\frac{1}{2}} 
	(M_0+\bar{\eta}\rho_0) 
	\frac{[\nu_i(N+1)\tau_N]^{-\frac{1}{2}}+\lambda_{n+1}^{\frac{1}{2}}}{
		\lambda_{n+1}}e^{-\lambda_{n+1}(N+1) \tau_N} .
	\end{eqnarray}
	Iterating, we obtain $\varrho_N \leq \mu^N \varrho_0 + \sum_{0}^{N-1} 
	\sigma_{N-j-1} \mu^j$ , with $\varrho_0 = \sup_{\bu_0 \in 
		\mathcal{A}}\|\bz_0\|_b$. 
	By \eqref{assorbHV}, \eqref{hp12}, \eqref{hp_2}:
	\begin{equation}\label{r0}
	\|\bz\|_b \leq  
	\bar{b}^{-\frac{1}{2}} ( M_0 +|\f|_b +\bar{\eta}\rho_0)(\gamma 
	\nu_i^{-1/2}+1) \lambda_{n+1}^{-1/2} . 
	\end{equation}
	Using \eqref{teo2.2} 
	we obtain:
	\begin{eqnarray}
	&&\sum_{0}^{N-1} \sigma_{N-j-1} \xi^j = 2\bar{b}^{-\frac{1}{2}} 
	(M_0+\bar{\eta}\rho_0) \frac{1}{
		\lambda_{n+1}^{1/2}}e^{- \lambda_{n+1} \chi
	}\left(\sum_{0}^{N-1} \xi^j\right) \nonumber
	\\
	&&  +  \left[ \beta_1 l \lambda_n^{-1} + \beta_2 
	\bar{b}^{-\frac{1}{2}} (M_1+\Pi\bar{\eta}) (1+l)\frac{(\gamma 
		\nu_i^{-\frac{1}{2}} + 1)}{ \lambda_{n+1}^{\frac{1}{2}}} 
	\right] \left(\sum_{0}^{N-1}
	\tau_{N-j-1} \xi^{j} \right), \nonumber
	\end{eqnarray}
	for $N\geq (\tau_N \nu_i)^{-1} $ and supposing that $\mu \leq 
	\frac{1}{2}$, 
	we have
	\begin{eqnarray}\label{sum}
	&&\qquad \sum_{0}^{N-1} \sigma_{N-j-1} \mu^j = 4 
	\bar{b}^{-\frac{1}{2}} (M_0+\bar{\eta}\rho_0) 
	\frac{1}{\lambda_{n+1}^{1/2}}e^{-\lambda_{n+1} \chi } 
	\\
	&& + 2
	\left[ \beta_1 l \lambda_n^{-1} + \beta_2 \bar{b}^{-\frac{1}{2}} (M_1+\Pi\bar{\eta}) (1+l)\frac{(\gamma 
		\nu_i^{-\frac{1}{2}} + 1)}{ \lambda_{n+1}^{\frac{1}{2}}} \right]
	\sup_{0\leq j\leq
		N-1} \tau_{N-j-1} . \nonumber
	\end{eqnarray}
	Combining \eqref{r0} and \eqref{sum} we obtain that
	\begin{eqnarray}\label{teo2.3}
	&& d_V (\mathcal{A},\mathcal{M}_N) \leq 2^{-N}\bar{b}^{-\frac{1}{2}}
	( M_0 +\bar{\eta}\rho_0+|\f|_b)(\gamma \nu_i^{-1/2}+1) \lambda_{n+1}^{-1/2} 
	\nonumber \\
	&& \qquad \qquad+ 4 \bar{b}^{-\frac{1}{2}} (M_0+\bar{\eta}\rho_0)
	\frac{1}{\lambda_{n+1}^{1/2}}e^{-\lambda_{n+1} \chi }
	\\
	&& + 2 \left[ \beta_1 l \lambda_n^{-1} + \beta_2 \bar{b}^{-\frac{1}{2}} (M_1+\Pi\bar{\eta}) (1+l)\frac{(\gamma 
		\nu_i^{-\frac{1}{2}} + 1)}{ \lambda_{n+1}^{\frac{1}{2}}} \right] 
	\sup_{0\leq j\leq
		N-1} \tau_{N-j-1} . \nonumber
	\end{eqnarray}
	Moreover, from \eqref{teo2.2} yields that $\tau_N
	\rightarrow 0$ as $N \rightarrow \infty$, hence from \eqref{teo2.3}
	we obtain \eqref{teo2.4} for $N\rightarrow \infty$.
	To complete the proof we determine $\lambda_n$ in such a way that the 
	previous estimates are satisfied. 
	Choosing $\lambda_n \geq \delta_2$, we can write
	$l=6 \left(\frac{1}{2}+  \sup_n \left(\frac{\nu_i\lambda_{n+1}}{\lambda_{n}}\right)^{\frac{1}{2}} \right)$, as given in \eqref{l_lip}. 
	In this way, if $\lambda_n \geq \max\left(\delta_2,\delta_3\right)$ with
	\begin{equation}
	\delta_3 \geq 4 \frac{\left(M_1+\Pi \bar{\eta}\right)^2}{\bar{b}}  
	\left[ 
	\left(3+6 \sup_n \left(\frac{ \nu_i\lambda_{n+1}}{  
		\lambda_{n}}\right)^{\frac{1}{2}} \right) \nu_i^{-\frac{1}{2}} 
	+ \left(\gamma \nu_i^{-\frac{1}{2}}+1\right)
	\right]^2,
	\end{equation}
	then  condition $\mu \leq \frac{1}{2}$ is satisfied and the proof is complete.  
\end{proof}

\section{Unbounded domain: asymptotic $L^2$ decay}

In this section we consider $\Omega=\mathbb{R}^2$, we suppose that  the force term $\f$ is time dependent and $\f \in L^1\left( \left[0 \right. ,  \left. +\infty \right), L^2_b(\mathbb{R}^2)  \right)$. Now the equations under consideration are as follows

\begin{eqnarray}
&&\frac{\partial \bu}{\partial t} +\bu \cdot \gr\bu +\gr p +\eta \bu = b^{-1}\gr \cdot [\nu b (\gr\bu 
+(\gr\bu)^{T}-\mathbf{I}\gr \cdot\bu)] + \mathbf{f}, \label{equation_1} \\ 
&&\gr\cdot (b \bu) = 0,\label{equation_2}  \\ 
&&\bu(\x,t=0)= \bu_0 ,\label{equation_3}
\end{eqnarray}
where $\x \in \mathbb{R}^2$, $\nu$ is the viscosity and we denote by $0<\nu_i=\inf_{\mathbb{R}^2} \nu$,  
$\eta$ is a smooth strictly positive function and $b(\x)$ represents the bottom topography of the basin satisfying 
\begin{equation}
0 < b_i \leq b(\mathbf{x}) \leq b_s.  \nonumber
\end{equation}

The Fourier splitting method, will be used to establish  the asymptotic $L^2$-decay of the weak solutions to the shallow water model with varying bottom topography.

\subsection{Mathematical settings}\label{sec:unbounded}

We denote by $L^2_b(\mathbb{R}^2)$ the weighted $L^2(\mathbb{R}^2)$ space with scalar product and norm defined by
\begin{equation}
\left( \bu,\bv \right)_b=\int_{\mathbb{R}^2} b \bu \cdot \bv d \x, \qquad \left\| \bu \right\|_{b,2}^2=\int_{\mathbb{R}^2} b \left|\bu \right|^2 d \x, \nonumber 
\end{equation}
and  $\left\| \cdot \right\|_p$ will denote the usual norm in $L^p(\mathbb{R}^2)$.
We also use the following notation for our spaces 
\begin{eqnarray}
H &=& \left\{ \bu :  \bu \in L^2_b \left(\mathbb{R}^2\right),  \quad \gr\cdot (b \bu)=0 \right\}, \\
V &=& \left\{ \bu :  \bu \in H^1_b \left(\mathbb{R}^2\right), \quad \gr\cdot (b \bu)=0  \right\},
\end{eqnarray}
and 
\begin{equation}
V_o = \left\{ \bu : \quad \bu \in H^1_b \left(\mathbb{R}^2\right) \cap \mathbb{S}^{'}\left(\mathbb{R}^2\right), \quad \gr\cdot (b \bu)=0  \right\},
\end{equation}
where $\mathbb{S}\left(\mathbb{R}^2\right)$ is the Schwartz class of smooth, rapidly decreasing functions.

A function $\bu(\x,t) \in C_w \left( \left[0 \right. , \left. \infty \right), H  \right)$ if $\bu \in L^{\infty} \left( \left[0 \right. , \left. \infty \right), H  \right)$ and $\left( \bu, \p  \right)_b$ is continuous with respect to time $t\geq 0$, for all $\p \in H^{'}$. 

As usual, the Fourier transform of an integral function $\bv (\x) \in L^2\left( \mathbb{R}^2\right)$ is $\hat{\bv}(\xi)=\int_{\mathbb{R}^2} \bv(\x) e^{-i \x \cdot \xi} d \x$.

\noindent A weak solution $\bu$ of problem \eqref{equation_1}-\eqref{equation_3} is a function belonging to $C_w \left( \left[0 , T \right], H  \right) \cap L^{2}\left( \left[0 , T \right], V_o  \right) $ for each $T>0$, satisfying the integral relation
\begin{eqnarray*}
	&&\left( \bu(t), \p(t) \right)_b + \int_0^t \left\{ - \left( \bu, \frac{\partial \p}{\partial t} \right)_b + \right.\\
	&& \qquad \left.+ \nu \left( \left(\gr\bu 
	+(\gr\bu)^{T}-\mathbf{I}\gr \cdot\bu \right)
	: \left(\gr \p 
	+(\gr\p)^{T}-\mathbf{I}\gr \cdot\p \right) \right)_b +\right.\\
	&& \qquad \qquad\left.+ \left( \eta \bu, \p\right)_b+ \left( \bu \cdot \gr\bu , \p \right)_b \right\} d\tau  =\int_0^t \left( \p,\f \right)_b d \tau + \left(u_0, \p(0)  \right)_b,
\end{eqnarray*}
for all $t\geq s\geq0$ and for every smooth vector fields 
$$\p \in C \left( \left[0 \right. ,  \left. +\infty \right), V  \right) \cap C^1 \left( \left[0 \right. ,  \left. +\infty \right), H  \right).$$

It is easy to prove the following two Propositions where, respectively, the strong energy inequality and the generalized energy inequality are given for a weak solution of \eqref{equation_1}-\eqref{equation_3}.

\begin{proposition}
	Let $\bu_0 \in L^2(\mathbb{R}^2)$ and $\f \in L^1\left( \left[0 \right. ,  \left. +\infty \right), L^2_b(\mathbb{R}^2)  \right)$. Then, for every $T>0$, there exists a unique weak solution $\bu(\x,t) \in C_w \left( \left[0 , T \right], H  \right) \cap L^{2}\left( \left[0 , T \right], V_o  \right) $ of system \eqref{equation_1}-\eqref{equation_3}, which satisfies the following strong energy inequality:
	\begin{equation}\label{diseq_energia}
	b_i \left\| \bu(t) \right\|_2^2+2 \nu_i b_i \int_s^t \left\| \gr \bu(\tau) \right\|_{2}^2 d \tau \leq b_s \left\| \bu(s) \right\|_2^2 + 2\int_s^t \left( \bu, \f \right)_{b} d \tau,  
	\end{equation}
	for almost all $s\geq 0$ including $s=0$ and all $t\geq s \geq 0$. 
\end{proposition}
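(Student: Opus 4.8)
The plan is to construct the solution by a Faedo–Galerkin scheme (following the constructions in \cite{Masuda84,ORS97,KPSark}), derive uniform energy bounds, pass to the limit while coping with the loss of the compact embedding $V_o\hookrightarrow H$ on the unbounded domain, and finally prove uniqueness via a two–dimensional Ladyzhenskaya estimate. First I would fix a Galerkin basis $\{\bw_j\}$ of weighted divergence-free fields (that is $\gr\cdot(b\bw_j)=0$), orthonormal in $H$ and total in $V_o$, and seek approximate solutions $\bu_m(t)=\sum_{j=1}^m g_{jm}(t)\bw_j$ of the projected system. Testing the projected equation with $\bu_m$, the crucial algebraic fact is that the nonlinear term drops out: since $\gr\cdot(b\bu_m)=0$, an integration by parts gives $(\bu_m\cdot\gr\bu_m,\bu_m)_b=\tfrac12\int_{\mathbb{R}^2} b\,\bu_m\cdot\gr|\bu_m|^2\,d\x=-\tfrac12\int_{\mathbb{R}^2}\gr\cdot(b\bu_m)\,|\bu_m|^2\,d\x=0$. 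Using that $\eta>0$ (so the zeroth-order term has a favorable sign) and the coercivity of the viscous deformation-tensor form (the whole-space analogue of \eqref{coe}, with the lower-order error $\gr\cdot\bu_m=-\bu_m\cdot\gr(\log b)$ absorbed since $b$ is smooth and bounded), one obtains the weighted energy identity for $\bu_m$. After dropping the nonnegative $\eta$-term and estimating the forcing by Young's inequality, this yields bounds for $\bu_m$ in $L^\infty([0,T];H)\cap L^2([0,T];V_o)$ that are uniform in $m$ and depend only on $\|\bu_0\|_{b,2}$ and $\|\f\|_{L^1([0,T];L^2_b)}$.

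Next I would extract a subsequence converging weak-$\ast$ in $L^\infty([0,T];H)$ and weakly in $L^2([0,T];V_o)$. \textbf{The main obstacle is passing to the limit in the nonlinear term}, because on $\mathbb{R}^2$ the embedding $V_o\hookrightarrow H$ is not compact, so Aubin–Lions cannot be applied globally. I would circumvent this by localizing: on each ball $B_R$ the embedding $H^1(B_R)\hookrightarrow L^2(B_R)$ is compact, and the equation furnishes a uniform bound on $\partial_t\bu_m$ in a negative-order space on $B_R$; Aubin–Lions then gives strong convergence of $\bu_m$ in $L^2([0,T];L^2(B_R))$, and a diagonal argument along an exhaustion $B_R\uparrow\mathbb{R}^2$ produces $\bu_m\to\bu$ strongly in $L^2_{\mathrm{loc}}$ and a.e. This strong local convergence is exactly what is needed to identify $(\bu_m\cdot\gr\bu_m,\p)_b\to(\bu\cdot\gr\bu,\p)_b$ for the compactly supported test fields $\p$, and hence to verify that the limit $\bu\in C_w([0,T];H)\cap L^2([0,T];V_o)$ is a weak solution attaining the datum $\bu_0$.

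For the strong energy inequality \eqref{diseq_energia}, I would begin from the energy \emph{equality} satisfied by each $\bu_m$, integrated from $s$ to $t$, and let $m\to\infty$. Weak lower semicontinuity of the $H$-norm and of the $L^2([s,t];V_o)$-seminorm gives the terms $\|\bu(t)\|$ and $\int_s^t\|\gr\bu\|_2^2$ the correct sign, while the strong $L^2_{\mathrm{loc}}$ convergence—valid for a.e.\ initial time $s$, and for $s=0$ because $\bu_m(0)\to\bu_0$ in $H$—controls the passage to the limit on the right-hand side. Bounding the weighted norms from below by $b_i\|\cdot\|_2^2$ and from above by $b_s\|\cdot\|_2^2$, and using $\nu\ge\nu_i$, $b\ge b_i$ in the dissipation, then yields precisely \eqref{diseq_energia} for almost all $s\ge0$ including $s=0$ and all $t\ge s\ge0$.

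Finally, uniqueness is expected to follow from the two-dimensional structure. For two weak solutions $\bu_1,\bu_2$ with the same data, I would test the equation for the difference $\bw=\bu_1-\bu_2$ against $\bw$; the troublesome nonlinear contribution is of the form $(\bw\cdot\gr\bu_i,\bw)_b$, which is estimated by the Ladyzhenskaya inequality $\|\bw\|_4^2\le C\|\bw\|_2\,\|\gr\bw\|_2$, so that the term $\|\bw\|_2\,\|\gr\bw\|_2\,\|\gr\bu_i\|_2$ can be split by Young's inequality, half of it absorbed into the viscous dissipation and the remainder controlled by $\|\gr\bu_i\|_2^2\,\|\bw\|_2^2$. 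Since $\|\gr\bu_i\|_2^2\in L^1([0,T])$ by the energy estimate, Gronwall's lemma forces $\bw\equiv0$, giving uniqueness.
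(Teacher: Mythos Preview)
Your proposal is correct and follows the same approach the paper invokes: the paper's own proof is a single sentence stating that existence, uniqueness, and the strong energy inequality follow from ``an application of the standard Galerkin technique (see \cite{Te3}),'' and your outline is precisely a careful expansion of that standard argument---Galerkin approximation, uniform energy bounds, local Aubin--Lions compactness to handle the unbounded domain, weak lower semicontinuity for \eqref{diseq_energia}, and Ladyzhenskaya--Gronwall for uniqueness. You have supplied considerably more detail than the paper itself.
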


\begin{proof}
	The existence and uniqness of a weak solution to problem \eqref{equation_1}-\eqref{equation_3} satisfying the strong energy inequality \eqref{diseq_energia} follows by an application of the  standard Galerkin technique (see ~\cite{Te3}).  
	  \end{proof}

Let $\bu(\x,t)= (u_1(\x,t), u_2(\x,t))$ be a vector function  and $\psi(\x,t)$ be a scalar function. In the sequel we use the notation 
\begin{equation} \psi^{'}=\partial_t \psi,\;\;
\psi \ast \bu =\left( \psi \ast u_1, \psi \ast u_2 \right), 
\end{equation}
where the convolution is calculated with respect to the $\x$ variable.

\begin{proposition}
	Let $\bu_0 \in L^2(\mathbb{R}^2)$ and $\f \in L^1\left( \left[0 \right. ,  \left. +\infty \right), L^2_b(\mathbb{R}^2)  \right)$. Let $Z\in C^1\left[0\right.,\left.\infty\right)$ with $Z(t)\geq 0$, and 
	$\psi(t)\in C^1 \left( \left[ 0 \right. ,\left. \infty \right); \mathcal{S}\left(\mathbb{R}^2\right) \right)$ 
	be arbitrary functions. 
	Let $\bu$ be a weak solution of system \eqref{equation_1}-\eqref{equation_3}, then the following generalized energy inequality holds:
	\begin{eqnarray}\label{energy}
	&& Z(t) b_i \left\| \psi(t) \ast \bu (t) \right\|_{2}^{2} \leq b_s Z(s) \left\| \psi(s) \ast \bu(s) \right\|_{2}^{2} \nonumber\\ 
	&&\qquad\qquad\qquad\qquad\qquad + b_s \int_{s}^{t} Z^{'}(\tau) \left\| \psi(\tau) \ast \bu (\tau) \right\|_{2}^{2} d \tau \nonumber \\
	&& \qquad\qquad + 2 \int_{s}^{t} Z(\tau) \left( \psi^{'}(\tau) \ast \bu (\tau), \psi(\tau) \ast \bu (\tau) \right)_b d \tau \\
	&&\qquad\qquad\qquad\qquad\qquad - 2\nu_i b_i \int_{s}^{t} Z(\tau) \left\| \psi(\tau) \ast \gr \bu (\tau) \right\|_{2}^{2} d \tau \nonumber \\
	&& + 2 \int_{s}^{t} Z(\tau) \left[ \left(\bu\cdot \gr \bu,\psi\ast \psi \ast \bu\right)_b(\tau)\right] d \tau
	+ 2\int_s^t Z(\tau) \left( \psi \ast \bu, \f \right)_{b} d \tau . \nonumber
	\end{eqnarray}
	for almost all $s\geq 0$ including $s=0$ and all $t\geq s \geq 0$.
\end{proposition}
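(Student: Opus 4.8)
The plan is to derive the generalized energy inequality \eqref{energy} from the strong energy inequality \eqref{diseq_energia} by testing the weak formulation against a mollified version of the solution and then multiplying by the weight $Z(t)$. More precisely, I would first fix the test function $\boldsymbol{\phi} = \psi(\tau) \ast \psi(\tau) \ast \bu$, which is the natural object to feed into the weak formulation because the double convolution produces the squared norm $\|\psi \ast \bu\|_2^2$ via Plancherel: the convolution operator with kernel $\psi$ is self-adjoint on $L^2$, so $(\bu, \psi \ast \psi \ast \bu)_b$ relates to $\|\psi \ast \bu\|_{2}^{2}$ up to the bottom topography weight $b$. Since $\psi(\tau) \in \mathcal{S}(\mathbb{R}^2)$ depends smoothly on time, $\boldsymbol{\phi}$ will be an admissible test field in $C([0,\infty),V) \cap C^1([0,\infty),H)$, and I would verify this admissibility as the first step.

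Next I would carry out the time differentiation of the quantity $Z(\tau)\, b \,\|\psi(\tau) \ast \bu(\tau)\|_2^2$. The product rule generates exactly the four structural terms on the right of \eqref{energy}: the factor $Z'(\tau)\|\psi \ast \bu\|_2^2$ comes from differentiating the weight; the term with $\psi'(\tau) \ast \bu$ comes from the explicit time dependence of the mollifier (and the factor $2$ arises because differentiating $\psi \ast \psi$ contributes symmetrically through both copies of $\psi$); and the contributions involving $\frac{\partial \bu}{\partial t}$ are replaced, using the weak formulation \eqref{equation_1}, by the viscous dissipation term, the nonlinear term $(\bu \cdot \gr\bu, \psi \ast \psi \ast \bu)_b$, and the forcing term $(\psi \ast \bu, \f)_b$. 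The viscous term is where the coercivity structure enters: the quadratic form in $\gr\bu$ evaluated on the mollified field is bounded below by $2\nu_i b_i \|\psi \ast \gr\bu\|_2^2$ after one accounts for the $b_i \leq b \leq b_s$ bounds, exactly mirroring how \eqref{diseq_energia} was obtained, with the inequality (rather than equality) accounting for the weighting by $b$ and giving the $b_i$ and $b_s$ factors displayed in \eqref{energy}.

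The rigorous point, and the step I expect to be the main obstacle, is that a weak solution does not a priori have enough regularity in time to differentiate $Z(\tau)\|\psi \ast \bu\|_2^2$ pointwise or to insert $\frac{\partial \bu}{\partial t}$ as a test object directly. The standard remedy, following \cite{Masuda84, ORS97, KPSark}, is to work at the Galerkin level where the approximate solutions are smooth in time, establish the identity \eqref{energy} (with equality) for the approximations, and then pass to the limit, the limit producing an inequality because the dissipation term is only weakly lower semicontinuous. The convolution with $\psi \in \mathcal{S}$ is precisely what makes this limit tractable: it smooths $\bu$ enough that the nonlinear term $(\bu \cdot \gr\bu, \psi \ast \psi \ast \bu)_b$ converges and the energy quantities are continuous in the weak topology of $H$ in which $\bu \in C_w([0,\infty),H)$ lives. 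I would therefore organize the proof as (i) derive the equality for Galerkin approximations, (ii) integrate in time against $Z$, (iii) pass to the limit using weak lower semicontinuity of the viscous term and strong convergence supplied by the mollifier, obtaining \eqref{energy} for almost every $s$, and then extend to $s=0$ using the initial continuity built into the weak-solution class.
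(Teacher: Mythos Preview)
Your proposal is correct and follows precisely the strategy the paper has in mind: the paper's own proof consists of a single sentence referring to \cite{KPSark} and \cite{ORS97}, and the Galerkin-level derivation with test function $\psi\ast\psi\ast\bu$, followed by passage to the limit via weak lower semicontinuity of the dissipation, is exactly the method of those references. In fact you have supplied considerably more detail than the paper does.
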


\begin{proof}
	To prove the generalized energy inequality \eqref{energy} one can follow ~\cite{KPSark}~\cite{ORS97}.   
	  \end{proof}

\vskip0.25cm
We give two preliminary Lemmas which are consequence of the generalized energy inequality \eqref{energy}.

\begin{lemma}
	Let $\bu$ be a weak solution of \eqref{equation_1}-\eqref{equation_3} satisfying the generalized energy inequality \eqref{energy} of Lemma 1. Then for every $\varphi \in \mathcal{S}(\mathbb{R}^2)$, we have: 
	\begin{eqnarray}\label{corollario_1}
	b_i \left\| \varphi \ast \bu (t) \right\|_{2}^{2}  &\leq& b_s \left\| e^{\frac{\nu_i b_i }{b}(t-s)\Delta} \varphi \ast \bu(s) \right\|_{2}^{2} \nonumber \\
	&&+ 2 \int_{s}^{t} \left[ \left(\bu\cdot \gr \bu,e^{2\frac{\nu_i b_i }{b}(t-\tau)\Delta}(\varphi\ast \varphi) \ast \bu\right)_b(\tau)\right] d \tau \nonumber \\
	&& +2 \int_s^t \left(  e^{\frac{\nu_i b_i }{b}(t-\tau)\Delta} \varphi \ast \bu(s), \f \right)_b d \tau ,
	\end{eqnarray}
	for almost all $s\geq 0$ including $s=0$ and all $t\geq s \geq 0$.
\end{lemma}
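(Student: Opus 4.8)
The plan is to specialise the generalised energy inequality \eqref{energy} to a carefully chosen pair $(Z,\psi)$ for which the two dissipative terms annihilate each other, leaving exactly \eqref{corollario_1} as the residual. For a fixed $t$, I would take $Z(\tau)\equiv 1$ and let $\psi(\tau)=e^{\frac{\nu_i b_i}{b}(t-\tau)\Delta}\varphi$ be the heat-type evolution issued from $\varphi$ at $\tau=t$ and run backward in $\tau$. Since $\varphi\in\mathcal{S}(\mathbb{R}^2)$ and the semigroup preserves the Schwartz class with smooth dependence on $\tau$, this $\psi$ meets the regularity required in \eqref{energy}, while $Z\in C^1$ and $Z\ge 0$ trivially.

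With these choices the easy terms fall out at once. Because $Z'\equiv 0$, the term $b_s\int_s^t Z'\|\psi\ast\bu\|_2^2\,d\tau$ disappears. The boundary factors are read off from $\psi(t)=\varphi$, which turns the left-hand side into $b_i\|\varphi\ast\bu(t)\|_2^2$, and from $\psi(s)=e^{\frac{\nu_i b_i}{b}(t-s)\Delta}\varphi$, which gives the first term $b_s\|e^{\frac{\nu_i b_i}{b}(t-s)\Delta}\varphi\ast\bu(s)\|_2^2$ on the right. For the nonlinear contribution I would diagonalise the double convolution by the Fourier transform: $\widehat{\psi\ast\psi}=\hat\psi^{\,2}=e^{-2\frac{\nu_i b_i}{b}(t-\tau)|\xi|^2}\hat\varphi^{\,2}$, whence $\psi\ast\psi=e^{2\frac{\nu_i b_i}{b}(t-\tau)\Delta}(\varphi\ast\varphi)$, which is exactly the kernel in the second term of \eqref{corollario_1}. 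The forcing term is read off directly from the corresponding term of \eqref{energy}.

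The core of the argument is the cancellation of the two remaining terms,
\[
2\int_s^t(\psi'(\tau)\ast\bu,\,\psi(\tau)\ast\bu)_b\,d\tau-2\nu_i b_i\int_s^t\|\psi(\tau)\ast\gr\bu\|_2^2\,d\tau .
\]
By construction $\psi'=-\frac{\nu_i b_i}{b}\Delta\psi$; writing $\phi:=\psi\ast\bu$ and commuting the convolution past the derivatives ($\psi\ast\gr\bu=\gr\phi$ and $\Delta\psi\ast\bu=\Delta\phi$), the first integrand becomes $\int b\left(-\frac{\nu_i b_i}{b}\Delta\phi\right)\cdot\phi\,d\x$. The whole point of the exponent $\frac{\nu_i b_i}{b}$ is that the weight $b$ of the inner product is cancelled exactly by the factor $\frac1b$ in the diffusivity, leaving $-\nu_i b_i\int\Delta\phi\cdot\phi\,d\x$; a single integration by parts, legitimate because $\phi=\psi\ast\bu$ inherits the rapid spatial decay of $\varphi$, turns this into $+\nu_i b_i\|\gr\phi\|_2^2$, which is precisely the viscous term with the opposite sign. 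Thus the two dissipative terms cancel and what remains is \eqref{corollario_1}.

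I expect this cancellation step to be the genuine obstacle, namely assigning a rigorous meaning to the operator $e^{\frac{\nu_i b_i}{b}(t-\tau)\Delta}$ carrying the spatially varying diffusivity $\frac{\nu_i b_i}{b}$ and to the pointwise identity $\psi'=-\frac{\nu_i b_i}{b}\Delta\psi$ under convolution, since a position-dependent diffusion is not a Fourier multiplier. One must either control the commutator errors produced by $\gr b$ and $\Delta b$ in the integration by parts, or freeze the coefficient and bound it through $0<b_i\le b\le b_s$, at the cost of replacing the exact cancellation by the inequality whose direction is consistent with the $b_i$ on the left and the $b_s$ on the right of \eqref{corollario_1}; either way the bounded-topography hypothesis is essential. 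By contrast, the admissibility of $\psi$ in \eqref{energy} and the justification of the integration by parts through the mollifying effect of $\varphi\in\mathcal{S}(\mathbb{R}^2)$ are routine.
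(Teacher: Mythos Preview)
Your approach is essentially the one in the paper: specialise \eqref{energy} with $Z\equiv 1$ and $\psi(\tau)=e^{\frac{\nu_i b_i}{b}(t-\tau)\Delta}\varphi$, so that the $\psi'$--term cancels the viscous term and the remaining pieces assemble into \eqref{corollario_1}. The only refinement the paper adds is a regularisation $\psi=e^{\frac{\nu_i b_i}{b}(t+\delta-\tau)\Delta}\varphi$ followed by $\delta\to 0$, which keeps the semigroup parameter strictly positive on the whole interval and is then removed at the end; the technical concerns you raise about the variable-coefficient heat operator are not addressed in the paper either, which simply refers to \cite{KPSark,ORS97}.
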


\begin{proof}
	Apply \eqref{energy} with $Z(t)=1$ and $\psi=e^{\frac{\nu_i b_i }{b}(t+\delta-\tau)\Delta} \varphi $ and let $\delta \rightarrow 0$ (see ~\cite{KPSark}~\cite{ORS97}).  
	  \end{proof}

\begin{lemma}
	Let $Z(t)\in C^1\left[0\right.,\left.+\infty\right)$ with $Z(t)\geq 0$. Let $\bu$ be a weak solution of \eqref{equation_1}-\eqref{equation_3} satisfying the generalized energy inequality \eqref{energy} of Lemma 2.
	Then for every $\varphi \in \mathcal{S}(\mathbb{R}^2)$, we have: 
	\begin{eqnarray}\label{corollario_2}
	&&\hskip0.8cm Z(t) b_i \left\| \bu(t) -\varphi \ast \bu (t) \right\|_{2}^{2}  \leq b_s Z(t) \left\| \bu(s)-\varphi \ast \bu(s) \right\|_{2}^{2} \\
	&&\hskip-0.7cm + b_s \int_{s}^{t} Z^{'}(\tau) \left\| \bu(\tau)-\varphi \ast \bu (\tau) \right\|_{2}^{2} d \tau - 2 b_i \nu_i  \int_{s}^{t} Z(\tau) \left\| \gr \bu(\tau)-\varphi \ast \gr \bu (\tau) \right\|_{2}^{2} d \tau \nonumber \\
	&&+ 2 \int_{s}^{t} Z(\tau) \left[ \left(\bu \cdot \gr \bu,\varphi\ast \varphi \ast \bu-2\varphi \ast \bu\right)_b(\tau)\right] d \tau + 2\int_s^t \left( \bu - \varphi \ast \bu, \f \right)_b. \nonumber
	\end{eqnarray} 
	for almost $s\geq 0$ including $s=0$ and all $t\geq s \geq 0$.
\end{lemma}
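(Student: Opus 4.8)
The plan is to read inequality \eqref{corollario_2} as the generalized energy inequality \eqref{energy} applied to the \emph{high-frequency} part $\bu-\varphi\ast\bu=(\delta-\varphi)\ast\bu$, i.e.\ to the (formal) time-independent filter $\psi=\delta-\varphi$. With this choice $\psi\ast\bu=\bu-\varphi\ast\bu$ and $\psi\ast\gr\bu=\gr\bu-\varphi\ast\gr\bu$, so the boundary term, the $Z'$ term and the dissipative term of \eqref{energy} reproduce exactly the first, second and third terms on the right of \eqref{corollario_2}; since $\psi$ is independent of time the $\psi'$ term drops out; and the force term becomes $2\int_s^t Z(\psi\ast\bu,\f)_b\,d\tau=2\int_s^t Z(\bu-\varphi\ast\bu,\f)_b\,d\tau$. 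Thus every term matches except the cubic one, which is where the argument must be done carefully.

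Because $\delta$ is not a Schwartz function, \eqref{energy} cannot be invoked directly, so first I would regularize. Choose a standard mollifier $\varphi_\epsilon\to\delta$ and set $\psi_\epsilon:=\varphi_\epsilon-\varphi\in\mathcal{S}(\mathbb{R}^2)$ (constant in time), and apply \eqref{energy} with $\psi=\psi_\epsilon$ and the given $Z$; this produces \eqref{energy} with $\psi_\epsilon\ast\bu$ in place of $\psi\ast\bu$ and a vanishing $\psi'$ term. I would then let $\epsilon\to0$. Since $\varphi_\epsilon\ast\bu\to\bu$ and $\varphi_\epsilon\ast\gr\bu\to\gr\bu$ in $L^2$, one gets $\psi_\epsilon\ast\bu\to\bu-\varphi\ast\bu$ and $\psi_\epsilon\ast\gr\bu\to\gr\bu-\varphi\ast\gr\bu$, which yields convergence of the left-hand side and of the first three terms on the right (the time integrals handled by dominated convergence using $\bu\in L^\infty([0,T],H)\cap L^2([0,T],V_o)$, and the endpoint terms using the $C_w$-continuity for $t$ and a.e.\ $s$). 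For the cubic term, expand $\psi_\epsilon\ast\psi_\epsilon=\varphi_\epsilon\ast\varphi_\epsilon-2\varphi_\epsilon\ast\varphi+\varphi\ast\varphi\to\delta-2\varphi+\varphi\ast\varphi$, hence $\psi_\epsilon\ast\psi_\epsilon\ast\bu\to\bu-2\varphi\ast\bu+\varphi\ast\varphi\ast\bu$ and
\[
(\bu\cdot\gr\bu,\psi_\epsilon\ast\psi_\epsilon\ast\bu)_b\;\longrightarrow\;(\bu\cdot\gr\bu,\bu)_b+(\bu\cdot\gr\bu,\varphi\ast\varphi\ast\bu-2\varphi\ast\bu)_b .
\]

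The key point, and the main obstacle, is to show that the self-interaction $(\bu\cdot\gr\bu,\bu)_b$ vanishes, so that only the term $(\bu\cdot\gr\bu,\varphi\ast\varphi\ast\bu-2\varphi\ast\bu)_b$ displayed in \eqref{corollario_2} survives. This follows from the constraint $\gr\cdot(b\bu)=0$: for a.e.\ $\tau$ one has $\bu(\tau)\in V_o$, and integrating by parts gives $(\bu\cdot\gr\bu,\bu)_b=\frac12\int_{\mathbb{R}^2}b\,\bu\cdot\gr|\bu|^2\,d\x=-\frac12\int_{\mathbb{R}^2}\gr\cdot(b\bu)\,|\bu|^2\,d\x=0$, the boundary contribution disappearing because $\bu\in\mathcal{S}'(\mathbb{R}^2)\cap H^1_b$ decays at infinity. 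To make the passage to the limit in the cubic term rigorous I would integrate by parts first, writing $(\bu\cdot\gr\bu,\bg)_b=-\int_{\mathbb{R}^2}b\,(\bu\otimes\bu):\gr\bg\,d\x$ using $\gr\cdot(b\bu)=0$, so that with $\bg=\psi_\epsilon\ast\psi_\epsilon\ast\bu$ the gradient lands on $\psi_\epsilon\ast\psi_\epsilon\ast\gr\bu\to\gr\bu$ in $L^2$; the two-dimensional interpolation inequality $\|\bu\|_4^2\leq C\|\bu\|_2\|\gr\bu\|_2$ gives $\bu\otimes\bu\in L^2$ for a.e.\ $\tau$, and the resulting $L^2$--$L^2$ pairing is controlled uniformly in $\epsilon$ by $C\|\bu(\tau)\|_2\|\gr\bu(\tau)\|_2$, an integrable majorant in $\tau$, so dominated convergence applies. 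Collecting all the limits reproduces precisely \eqref{corollario_2}.
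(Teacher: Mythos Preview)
Your approach is exactly the one the paper uses: apply \eqref{energy} with the time-independent filter $\psi=\zeta_n-\varphi$ (your $\varphi_\epsilon-\varphi$), where $\zeta_n$ is a smooth approximation of the Dirac mass, and pass to the limit; the paper's own proof is a single sentence to this effect, deferring all details to \cite{KPSark,ORS97}, while you have supplied precisely those details (including the key cancellation $(\bu\cdot\gr\bu,\bu)_b=0$ from $\gr\cdot(b\bu)=0$). One small slip: after your integration by parts, $\psi_\epsilon\ast\psi_\epsilon\ast\gr\bu$ converges not to $\gr\bu$ but to $\gr\bu-2\varphi\ast\gr\bu+\varphi\ast\varphi\ast\gr\bu$; this does not affect the argument, since the convergence is still in $L^2$ with the same uniform majorant.
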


\begin{proof}
	Apply \eqref{energy} with $\psi=\zeta_n-\varphi$, with $\zeta_n(\x)=n^{-1} \zeta(\x / n)$ is a smooth and compactly supported approximation of the Dirac measure, and let $n \rightarrow \infty$ (see ~\cite{KPSark}~\cite{ORS97}). 
	  \end{proof}

\subsection{Non-Uniform decay}\label{sec:nu-decay}

We now state the main theorem of the section:

\begin{theorem}
	Let $\bu_0 \in L^2(\mathbb{R}^2)$ and $\f \in L^1\left( \left[0 \right. ,  \left. +\infty \right), L^2_b(\mathbb{R}^2)  \right)$. Let $\bu$ be a weak solution of problem \eqref{equation_1}-\eqref{equation_3},  then
	\begin{equation}
	\lim_{t \rightarrow +\infty} \left\| \bu \right\|_2 =0. 
	\end{equation}
	
\end{theorem}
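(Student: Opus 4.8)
The plan is to prove the decay by the \emph{Fourier splitting method}, using the generalized energy inequality \eqref{energy} and its corollaries \eqref{corollario_1}, \eqref{corollario_2} as a substitute for a direct Fourier analysis of \eqref{equation_1}, which is unavailable because the weighted constraint $\gr\cdot(b\bu)=0$ and the variable coefficients $b,\nu,\eta$ destroy translation invariance. First I would record the basic a priori bounds: from \eqref{diseq_energia} with $s=0$ and the estimate $2\int_0^t(\bu,\f)_b\,d\tau\le 2\big(\sup_{0\le\tau\le t}\|\bu(\tau)\|_{b,2}\big)\int_0^t\|\f(\tau)\|_{b,2}\,d\tau$, an absorption (Gronwall) argument based on $\f\in L^1([0,\infty),L^2_b)$ yields the uniform bound $\sup_{t\ge0}\|\bu(t)\|_2=:C_0<\infty$ and, feeding this back into \eqref{diseq_energia}, the finite dissipation budget $\int_0^\infty\|\gr\bu(\tau)\|_2^2\,d\tau<\infty$.

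\textbf{Splitting and integrating factor.} For a radius $\rho(t)\downarrow 0$ to be chosen, Plancherel gives $\|\gr\bu\|_2^2=\int_{\mathbb{R}^2}|\xi|^2|\hat{\bu}|^2\,d\xi\ge\rho(t)^2\big(\|\bu\|_2^2-\int_{|\xi|\le\rho(t)}|\hat{\bu}|^2\,d\xi\big)$. Passing \eqref{diseq_energia} to its differential form and inserting this bound yields, with $c=2\nu_i b_i/b_s$,
\[
\frac{d}{dt}\|\bu\|_2^2+c\,\rho(t)^2\|\bu\|_2^2\le c\,\rho(t)^2\!\!\int_{|\xi|\le\rho(t)}\!\!|\hat{\bu}(\xi,t)|^2\,d\xi+C\|\f(t)\|_{b,2}.
\]
Choosing $\rho(t)^2=k/(t+1)$ and multiplying by the integrating factor $(t+1)^{ck}$, an integration from $s$ to $t$ reduces the proof to showing that the accumulated low-frequency source and the forcing term, once divided by $(t+1)^{ck}$, tend to $0$; the forcing part is controlled by the $L^1$-tail $\int_s^\infty\|\f\|_{b,2}\,d\tau$, which is arbitrarily small for $s$ large.

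\textbf{The crux: vanishing of the low-frequency mass.} It remains to show $\int_{|\xi|\le\rho(t)}|\hat{\bu}(\xi,t)|^2\,d\xi\to0$. Writing this mass as $\|\varphi_{\rho(t)}*\bu(t)\|_2^2$ with $\varphi_\rho$ a Schwartz low-pass filter, I would apply \eqref{corollario_1} with $\varphi=\varphi_{\rho(t)}$ and a fixed earlier time $s$, decomposing the mass into a linear term $b_s\|e^{\frac{\nu_i b_i}{b}(t-s)\Delta}\varphi*\bu(s)\|_2^2$, a nonlinear term $2\int_s^t(\bu\cdot\gr\bu,\,e^{2\frac{\nu_i b_i}{b}(t-\tau)\Delta}(\varphi*\varphi)*\bu)_b\,d\tau$, and a forcing term. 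The forcing term is again bounded by the $L^1$-tail of $\f$ past $s$; the nonlinear term is estimated by the two-dimensional interpolation inequality $\|\bu\|_4^2\le C\|\bu\|_2\|\gr\bu\|_2$, which gives $|(\bu\cdot\gr\bu,\cdot)_b|\le C\|\bu\|_2\|\gr\bu\|_2^2$ and hence a bound by $CC_0\int_s^\infty\|\gr\bu\|_2^2\,d\tau$, small for $s$ large by the dissipation budget; the linear term tends to $0$ as $t\to\infty$ for each fixed $s$ by dominated convergence, since its symbol $e^{-c(t-s)|\xi|^2}$ suppresses every fixed frequency. The complementary high-frequency bookkeeping is supplied by \eqref{corollario_2}.

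\textbf{Conclusion and main obstacle.} Given $\varepsilon>0$, I would first fix $s$ so large that all $s$-tail contributions above are $<\varepsilon$, then let $t\to\infty$ so that the linear and integrating-factor terms vanish, obtaining $\limsup_{t\to\infty}\|\bu(t)\|_2^2\le C\varepsilon$; as $\varepsilon$ is arbitrary this gives $\|\bu(t)\|_2\to0$. The main obstacle is precisely the crux step: the absence of translation invariance forces the heat-semigroup comparison of \eqref{corollario_1}--\eqref{corollario_2} in place of a bare Fourier transform, and one must balance three contributions of different nature --- linear decay, the $L^1$-in-time forcing tail, and the nonlinear term controlled only through the finite dissipation budget and the 2D interpolation inequality --- so that the low-frequency mass is made uniformly small. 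Carrying out the nonlinear estimate with the sole regularity $\bu\in L^\infty_t L^2\cap L^2_t V_o$ is the technically heaviest point.
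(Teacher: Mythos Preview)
Your strategy coincides with the paper's at the level of ingredients: Fourier splitting, the heat--semigroup comparison in \eqref{corollario_1} for the low--frequency mass, Gagliardo--Nirenberg $\|\bu\|_4^2\le C\|\bu\|_2\|\gr\bu\|_2$ to reduce the nonlinear contribution to the finite dissipation budget $\int_s^\infty\|\gr\bu\|_2^2\,d\tau$, the $L^1$--tail of $\f$ for the forcing, and dominated convergence for the linear heat term. The organization, however, differs. The paper fixes once and for all the Gaussian $\check\varphi(\xi)=e^{-|\xi|^2}$ and splits $\hat\bu=\check\varphi\hat\bu+(1-\check\varphi)\hat\bu$; the low part is handled exactly as you describe via \eqref{corollario_1}, while the high part is treated with \eqref{corollario_2}, and it is \emph{there} that the time--dependent radius $G(t)^2=\alpha/(2b_s(1+t))$ and the weight $Z(t)=(1+t)^\alpha$ enter, together with the extra smallness $|1-\check\varphi(\xi)|\le|\xi|^2$ near $\xi=0$. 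You instead put the time--dependent radius $\rho(t)$ at the top level, applied to $\bu$ itself, and only afterwards invoke \eqref{corollario_1} with a $t$--dependent mollifier $\varphi_{\rho(t)}$.

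The one genuine soft spot in your write--up is the sentence ``passing \eqref{diseq_energia} to its differential form''. The strong energy inequality is stated only for almost every $s$ and all $t\ge s$, with distinct constants $b_i,b_s$ on the two sides; for a weak solution you do not know that $t\mapsto\|\bu(t)\|_2^2$ is absolutely continuous, so a pointwise differential inequality is not directly available. The paper's arrangement avoids this entirely by working only with the \emph{integrated} inequalities \eqref{corollario_1} and \eqref{corollario_2} (the weight $Z$ and the Fourier splitting are performed inside \eqref{corollario_2}, not on a differentiated \eqref{diseq_energia}). Your scheme can be repaired the same way: instead of differentiating \eqref{diseq_energia}, apply the generalized energy inequality \eqref{energy} with $Z(t)=(1+t)^{ck}$ and $\psi$ an approximate identity, which yields the integrated version of your displayed ODE inequality without any differentiability assumption. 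With that adjustment your argument is correct and essentially equivalent to the paper's; the paper's choice of a fixed $\varphi$ simply makes the bookkeeping a bit cleaner, since the $\|\cdot\|_1$--norms of the convolution kernels do not depend on $t$.
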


\begin{proof} The proof is based on ideas of 
	~\cite{KPSark},~\cite{ORS97}:\\
	We decompose the $L^2$-norm of the Fourier transform of the weak solution $\bu$ as follows
	\begin{equation}\label{inizio}
	\left\| \bu(t) \right\|_2 = \left\| \hat{\bu}(t) \right\|_2 \leq \left\| \check{\varphi} \hat{\bu}(t) \right\|_2 
	+  \left\| \left( 1- \check{\varphi} \right) \hat{\bu}(t) \right\|_2, 
	\end{equation}
	where $\displaystyle{\check{\varphi}(\boldmath{\xi})=e^{-\left|\boldmath{\xi}\right|^2}}$ is the inverse Fourier Transform of $\displaystyle{\varphi(\x)=\frac{1}{4\pi}e^{-\frac{\left| \x \right|^2}{4}}}$, the fundamental solution of the heat equation at $t=1$.
	We estimate separately the low frequencies  and the high energy frequencies  terms in \eqref{inizio}.
	
	\medskip 
	
	\noindent {\it Low frequencies term estimate}: Using Plancherel identity and \eqref{corollario_1}, we have 
	\begin{eqnarray*}
		b_i \left\| \check{\varphi}\hat{\bu}(t) \right\|_2^2  &=&b_i \left\| \varphi \ast \bu(t) \right\|_2^2 \leq b_s \left\| e^{\frac{\nu_i b_i }{b}(t-s)\Delta} \varphi \ast \bu(s) \right\|_2^2 \\
		&&  + 2 \int_s^t \left| \left(\bu \cdot \gr \bu,e^{2\frac{\nu_i b_i }{b}(t-\tau)\Delta}\varphi \ast \varphi \ast \bu \right)_b(\tau) \right|d \tau \\
		&&\qquad+ 2\int_s^t \left(  e^{\frac{\nu_i b_i }{b}(t-\tau)\Delta} \varphi \ast \bu(s), \f \right)_b d \tau.
	\end{eqnarray*}
	
	Using (see ~\cite{Te3}) the Schwarz, H\"{o}lder and Young inequalities  and the Gagliardo-Nirenberg interpolation inequality, we have
	\begin{eqnarray*}
		\left| \left(\bu \cdot \gr \bu,e^{2\frac{\nu_i b_i }{b}(t-\tau)\Delta}\varphi \ast \varphi \ast \bu \right)_b(\tau) \right| 
		\leq C \left\| \bu\right\|_4 \left\| \gr \bu \right\|_2 \left\| e^{2\frac{\nu_i b_i }{b}(t-s)\Delta} \varphi \ast \varphi \ast \bu \right\|_4 && \\
		\leq C \left\| e^{2\frac{\nu_i b_i }{b}(t-s)\Delta} \varphi \ast \varphi \right\|_{1} \left\| \gr \bu \right\|_{2} \left\| \bu \right\|_4^2 &&\\
		\leq C \left\| e^{2\frac{\nu_i b_i }{b}(t-s)\Delta} \varphi \ast \varphi \right\|_{1} \left\| \gr \bu \right\|_2^2 \left\| \bu \right\|_2 .&&
	\end{eqnarray*}

	It is easy to prove (see ~\cite{W87}) that there exists a constant $\kappa=\kappa(\bu_0,\f)$ such that
	\begin{equation}
	\left| \left( \bu, \f \right)_b \right| \leq  \kappa \left\| \f \right\|_{b,2}, \quad \mathrm{and} \qquad \left| \left( e^{\frac{\nu_i b_i }{b}(t-\tau)\Delta} \varphi \ast \bu, \f \right)_b \right| \leq  \kappa \left\| \f \right\|_{b,2}.
	\end{equation}
	From the strong energy inequality \eqref{diseq_energia} we have that
	
	\begin{equation}\label{diseq_energia_bis}
	\left\| \bu (t) \right\|_2^2 \leq \left\| \bu_0  \right\|_2^2 + 2\kappa \int_0^t \left\| \f \right\|_{b,2} d \tau.   
	\end{equation}
	
	Hence
	\begin{eqnarray}
	\left\| \check{\varphi}\hat{\bu}(t) \right\|_2^2 &\leq& \frac{b_s}{b_i} \left\| e^{\frac{\nu_i b_i }{b}(t-s)\Delta} \varphi \ast \bu(s) \right\|_2^2 \nonumber \\ 
	&& + \frac{C}{b_i} \left(\left\| \bu_0 \right\|_2^2 +2 \kappa \int_0^{+\infty} \left\| \f \right\|_{b,2} d \tau \right)^{\frac{1}{2}} \int_s^{+\infty} \left\| \gr \bu \right\|_2^2 d \tau \nonumber \\
	&&+2\frac{\kappa}{b_i} \int_s^{+\infty} \left\| \f \right\|_{b,2} d \tau. \nonumber
	\end{eqnarray}
	
	By the Lebesgue dominated convergence theorem, it follows that, as $t\rightarrow +\infty$,
	\begin{eqnarray}
	\left\| e^{\frac{\nu_i b_i }{b}(t-s)\Delta} \varphi \ast \bu(s) \right\|_2^2&\leq& \left\| e^{\nu_i (t-s)\Delta} \varphi \ast \bu(s) \right\|_2^2 = \nonumber \\
	&&=\left\| e^{-\nu_i (t-s)\boldmath{\xi}^2} \check{\varphi} \hat{\bu}(s) \right\|_2^2 \rightarrow 0,
	\end{eqnarray}
	for each $s\geq0$, since $\check\varphi \hat{\bu}(s) \in L^2(\mathbb{R}^2)$.
	
	Since $\displaystyle{\int_0^{+\infty} \left\| \gr \bu \right\|_2^2 d \tau < \infty}$ by the strong energy inequality \eqref{diseq_energia} and\\ $\displaystyle{\int_0^{+\infty} \left\| \f \right\|_{b,2} d \tau < \infty}$ by hypothesis, the quantities $\displaystyle{\int_s^{+\infty} \left\| \gr \bu \right\|_2^2 d \tau }$ and \\$\displaystyle{\int_s^{+\infty} \left\| \f \right\|_{b,2} d \tau }$ are small for $s$ suitable large, then  $\displaystyle{\left\| \check{\varphi}\hat{\bu}(t) \right\|_2 \rightarrow 0}$ as $\displaystyle{t \rightarrow 0}$. 
	
	\medskip
	\vskip0.25cm
	\noindent {\it High frequencies term estimate:} Use Corollary \eqref{corollario_2} with $\displaystyle{\check{\varphi}(\boldmath{\xi})=e^{-\left| \boldmath{\xi} \right|^2}}$, and $Z(t)$ determined below. 
	Consider a function $G(t)\geq 0$, to be determined below, and apply the Fourier splitting method to the first two terms in \eqref{corollario_2}: 
	\begin{eqnarray*}
		&&\int_{s}^{t} Z^{'}(\tau) \left\| \bu(\tau)-\varphi \ast \bu (\tau) \right\|_{2}^{2} d \tau - 2 b_s \int_{s}^{t} Z(\tau) \left\| \gr \bu(\tau)-\varphi \ast \gr \bu (\tau) \right\|_{2}^{2} d \tau \\
		&&\qquad\qquad\qquad= \int_{s}^{t} Z^{'}(\tau) \int_{\left|\boldmath{\xi}\right|>G} 
		\left| \left( 1- \check{\varphi}(\boldmath{\xi}) \right) \hat{\bu}(\boldmath{\xi},\tau) \right|^{2} 
		d \boldmath{\xi} d \tau \\ 
		&&\qquad\qquad\qquad- 2 \int_{s}^{t} Z(\tau) \int_{\left|\boldmath{\xi}\right|>G} b_s \left| \left|\boldmath{\xi}\right| \left(1-\check{\varphi}(\boldmath{\xi}) \right) \hat{\bu}(\boldmath{\xi},\tau)\right|^{2} d \boldmath{\xi} d \tau \\
		&&\qquad\qquad\qquad+ \int_{s}^{t} Z^{'}(\tau) \int_{\left|\boldmath{\xi}\right|\leq G} 
		\left| \left( 1- \check{\varphi}(\boldmath{\xi}) \right) \hat{\bu}(\boldmath{\xi},\tau) \right|^{2} 
		d \boldmath{\xi} d \tau \\ 
		&&\qquad\qquad\qquad- 2 \int_{s}^{t} Z(\tau) \int_{\left|\boldmath{\xi}\right| \leq G} b_s \left| \left|\boldmath{\xi}\right| \left(1-\check{\varphi}(\boldmath{\xi}) \right) \hat{\bu}(\boldmath{\xi},\tau)\right|^{2} d \boldmath{\xi} d \tau. 
	\end{eqnarray*}
	Choose
	\begin{equation}
	Z(t)=(1+t)^\alpha \qquad \mathrm{and} \qquad G^2=\frac{\alpha}{2 b_s (t+1)}, 
	\end{equation}
	with $\alpha>0$ fixed, then $Z(t)$ and $G(t)$ satisfies the following equation:
	\begin{equation}
	Z^{'}(t)-2 b_s Z(t)G^2(t)=0. \nonumber 
	\end{equation}
	Hence the last equation is reduced to 
	\begin{eqnarray*}
		&&\int_{s}^{t} Z^{'}(\tau) \int_{\left|\boldmath{\xi}\right|>G} 
		\left| \left( 1- \check{\varphi}(\boldmath{\xi}) \right) \hat{\bu}(\boldmath{\xi},\tau) \right|^{2} 
		d \boldmath{\xi} d \tau \\ 
		&&\qquad\qquad- 2 \int_{s}^{t} Z(\tau) \int_{\left|\boldmath{\xi}\right|>G} b_s \left| \left|\boldmath{\xi}\right| \left(1-\check{\varphi}(\boldmath{\xi}) \right) \hat{\bu}(\boldmath{\xi},\tau)\right|^{2} d \boldmath{\xi} d \tau \\
		&& \leq \int_s^t \left( Z^{'}-2b_sZG^2 \right) \int_{\boldmath{\left| \xi \right|>0}} 
		\left| \left(1-\check{\varphi}(\boldmath{\xi}) \right) \hat{\bu}(\boldmath{\xi},\tau)\right|^{2} d \boldmath{\xi} d \tau =0 . 
	\end{eqnarray*}
	
	As $\left| 1- \check{\varphi}(\boldmath{\xi}) \right| \leq \left| \boldmath{\xi} \right|^2$, then for small $\left| \boldmath{\xi} \right|$ we have 
	
	\begin{eqnarray*}
		&&\int_{s}^{t} Z^{'}(\tau) \int_{\left|\boldmath{\xi}\right|\leq G} 
		\left| \left( 1- \check{\varphi}(\boldmath{\xi}) \right) \hat{\bu}(\boldmath{\xi},\tau) \right|^{2} 
		d \boldmath{\xi} d \tau \leq \\
		&&\qquad\leq C \left\| \bu_0 \right\| \int_s^t Z^{'}(\tau) G^4(\tau) d \tau \leq  C \int_s^t \left(1 +\tau \right)^{\alpha-3} d \tau.
	\end{eqnarray*}
	
	The last two terms in \eqref{corollario_2} can be simplified denoting by $\chi=\varphi \ast \varphi-2 \varphi$, and combining (see ~\cite{Te3})the  Schwarz, the H\"{o}lder and the Young inequalities, the Gagliardo-Nirenberg interpolation inequality, and the strong energy inequality \eqref{diseq_energia_bis}, 
	
	\begin{eqnarray*}
		& &\int_s^t Z(\tau) \left| \left(\bu \cdot \gr \bu, \varphi \ast \varphi \ast \bu - 2 \varphi \ast \bu \right)_b (\tau) \right| d \tau = \\
		&& \quad = \int_s^t Z(\tau) \left| \left(\bu \cdot \gr \bu, \chi \ast \bu \right)_b (\tau) \right| d \tau \leq\int_s^t Z(\tau) \left\| \bu \right\|_4 \left\| \gr \bu \right\|_{2} \left\| \chi \ast \bu \right\|_4 d \tau \\
		& &\quad \leq C \left\| \chi \right\|_{1} \int_s^t Z(\tau) \left\| \bu\right\|_4^2 \left\| \gr \bu \right\|_{2} d \tau  \leq C \left\| \chi \right\|_{1} \int_s^t Z(\tau) \left\| \bu\right\|_2 \left\| \gr \bu \right\|_{2}^2 d \tau \\
		& &\quad \leq C \left\| \chi \right\|_{1} \left(\left\| \bu_0 \right\|_2^2 +2 \kappa \int_0^{+\infty} \left\| \f \right\|_{b,2} d \tau \right)^{\frac{1}{2}} \int_s^t Z(\tau) \left\| \gr \bu \right\|_{2}^2 d \tau,
	\end{eqnarray*}
	and
	
	\begin{eqnarray*}
		\int_s^t Z(\tau) \left( \varphi \ast \bu , \f \right)_b d \tau \leq  \kappa \int_s^t Z(\tau) \left\| \f \right\|_{b,2} d \tau . \nonumber
	\end{eqnarray*}
	Combining the previous estimates yields
	
	\begin{eqnarray*}
		\left\| \left(1-\check{\varphi}\right) \hat{\bu} (t) \right\|_2^2 &\leq & \frac{b_s Z(s)}{b_i Z(t)} \left\| \left(1-\check{\varphi}\right) \hat{\bu} (s) \right\|_2^2 + \frac{C}{Z(t)} \int_s^t (1+\tau)^{\alpha-3} d \tau \\
		&& \quad + \frac{1}{Z(t)} \left( C \int_s^t Z(\tau) \left\| \gr \bu \right \|_2^2 d \tau + \kappa  \int_s^t Z(\tau) \left\| \f \right \|_{b,2} d \tau \right).
	\end{eqnarray*}
	We compute the $\limsup$ as $t\rightarrow +\infty$ for fixed $s>0$.
	
	\noindent Since $Z(t)=(1+t)^{\alpha}$ for  some $\alpha>0$, it follows that  $\displaystyle{\frac{Z(s)}{Z(t)} \rightarrow 0}$ when $t\rightarrow +\infty$. Moreover we have that
	
	\begin{equation}
	\limsup_{t \rightarrow +\infty} \frac{1}{(t+1)^{\alpha}} \int_s^t (1+\tau)^{\alpha-3} d \tau=0. \nonumber
	\end{equation}
	
	As $\displaystyle{\frac{Z(\tau)}{Z(t)} \leq 1}$ for $\tau \in \left[0,t\right]$, then
	
	\begin{equation}
	\limsup_{t \rightarrow +\infty} \left\| \left(1-\check{\varphi}\right) \hat{\bu} (t) \right\|_2^2 \leq C \int_s^{+\infty} \left\| \gr \bu (\tau) \right \|_2^2 d \tau + 2 \kappa \int_s^{+\infty} \left\| \f \right \|_{b,2} d \tau ,
	\end{equation}
	
	hence $\limsup_{t \rightarrow +\infty} \left\| \left(1-\check{\varphi}\right) \hat{\bu} (t) \right\|_2^2 d =0$, for $s$ sufficiently large. 
\end{proof}

\subsection{Uniform decay}\label{sec:u-decay}

In this section we want to prove the uniform rate of decay for the solutions of the viscous shallow water equations \eqref{equation_1}-\eqref{equation_3}.

We suppose for simplicity, that $\nu$ is a constant, then \eqref{equation_1}-\eqref{equation_3} can be written as:

\begin{eqnarray}
&&\frac{\partial \bu}{\partial t} +\bu \cdot \gr\bu +\eta \bu + \gr p  = \frac{\nu}{b}\gr \cdot [b (\gr\bu 
+(\gr\bu)^{T}-\mathbf{I}\gr \cdot\bu)] + \f , \label{equation_1_bis} \\ 
&&\gr\cdot (b \bu) = 0,\label{equation_2_bis}  \\ 
&&\bu(\x,t=0)= \bu_0 .\label{equation_3_bis}
\end{eqnarray}

Suppose that the force term $\f$ satisfies the following properties:
\begin{eqnarray}
&&\f =D \bold{g}, \, \text{where}\, D \, \text{is any first order derivative} \label{hypot_force_1} \\
&&\qquad\qquad\, \text{and} \, \bold{g} \in 
L^\infty \left( \left[0 \right. ,  \left. +\infty \right), L^1(\mathbb{R}^2)  \right),\nonumber\\
&&
\| \f \|_2 \leq \kappa (e+t)^{-2}.\label{hypot_force_2}
\end{eqnarray}

In particular we prove the following theorem:
\vskip0.5cm	

\begin{theorem}
	
	Suppose that $\bu_0 \in L^2(\mathbb{R}^2)\cap L^1(\mathbb{R}^2)$ and let $\bu$ be the weak solution of the viscous shallow water equations \eqref{equation_1_bis}-\eqref{equation_3_bis}.
	Suppose that $\f$ satisfies \eqref{hypot_force_1} and \eqref{hypot_force_2}, then
	
	\begin{equation}
	\left\| \bu \right\|_2 \leq C \left(\log(e+t)\right)^{-1/2}, 
	\end{equation}
	
	with $C$ a constant which depends on $\f$, $b$, $\eta$ and $\bu_0$. 		 
	
\end{theorem}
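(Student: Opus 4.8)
The plan is to run the Fourier splitting method of the previous subsection, but now against a logarithmic weight tuned to the algebraic rate $(e+t)^{-2}$ of the force. First I would insert the weight $Z(t)=\log(e+t)$ into the generalized energy inequality \eqref{energy}, taking $\psi=\zeta_n$ an approximation of the Dirac mass (exactly as in the derivation of \eqref{corollario_2}) and letting $n\to\infty$. Since $\psi'=0$ and the trilinear term satisfies $(\bu\cdot\gr\bu,\bu)_b=0$ by the incompressibility constraint $\gr\cdot(b\bu)=0$, those two contributions drop out and one is left with
\begin{equation*}
b_i Z(t)\|\bu(t)\|_2^2 \leq b_s Z(s)\|\bu(s)\|_2^2 + b_s\int_s^t Z'\|\bu\|_2^2\,d\tau - 2\nu_i b_i\int_s^t Z\|\gr\bu\|_2^2\,d\tau + 2\int_s^t Z\,(\bu,\f)_b\,d\tau .
\end{equation*}

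Next I would apply the splitting to the two middle terms. By Plancherel they equal $\int\bigl(b_s Z'-2\nu_i b_i Z|\xi|^2\bigr)|\hat{\bu}|^2\,d\xi$, so choosing the splitting radius $G(t)$ through $G^2=\frac{b_s Z'}{2\nu_i b_i Z}$ makes the integrand nonpositive on $\{|\xi|>G\}$ and bounded by $b_s Z'$ on $\{|\xi|\leq G\}$. With $Z=\log(e+t)$ this gives $G^2=\frac{b_s}{2\nu_i b_i}\,\frac{1}{(e+t)\log(e+t)}$, and the inequality becomes, in differential form,
\begin{equation*}
\frac{d}{dt}\bigl[b_i Z\|\bu\|_2^2\bigr]\leq b_s Z'\int_{|\xi|\leq G}|\hat{\bu}|^2\,d\xi + 2Z\,(\bu,\f)_b .
\end{equation*}

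The core of the argument is the low-frequency bound for $\int_{|\xi|\leq G}|\hat{\bu}|^2\,d\xi$. Using the mild (Duhamel) representation of \eqref{equation_1_bis} in Fourier variables, together with $|\hat{\bu}_0(\xi)|\leq\|\bu_0\|_1$ (from $\bu_0\in L^1$), the nonlinear bound $|\widehat{\bu\cdot\gr\bu}(\xi)|\leq|\xi|\,\|\bu\|_2^2$, the fact that the bounded zeroth-order damping $\eta\bu$ only contributes favorably, and crucially $|\hat{\f}(\xi,\tau)|\leq|\xi|\,\|\bg\|_{L^\infty L^1}$ arising from the hypothesis $\f=D\bg$ with $\bg\in L^\infty L^1$ (so that $\hat{\f}$ vanishes linearly at $\xi=0$), I would prove $\sup_{|\xi|\leq G}|\hat{\bu}(\xi,t)|\leq C_0+C\,G(t)\,t$. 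Here the sole a priori input needed to absorb the accumulated nonlinear and forcing contributions is the uniform energy bound $\|\bu\|_2\leq C$, which holds because $\int_0^{\infty}\|\f\|_{b,2}\,d\tau<\infty$ by \eqref{hypot_force_2}. In two dimensions this yields $\int_{|\xi|\leq G}|\hat{\bu}|^2\,d\xi\leq\pi G^2\bigl(C_0^2+C^2 G^2 t^2\bigr)$.

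Finally I would integrate in time. With the above choices the low-frequency term $b_s Z'\pi G^2(C_0^2+C^2 G^2 t^2)$ is of order $(e+t)^{-2}(\log(e+t))^{-1}$ plus $(e+t)^{-1}(\log(e+t))^{-2}$, both integrable on $[0,\infty)$, while $2Z\,|(\bu,\f)_b|\leq 2b_s C\kappa\,\log(e+t)\,(e+t)^{-2}$ is integrable thanks to \eqref{hypot_force_2}. Hence $b_i Z(t)\|\bu(t)\|_2^2$ remains bounded, which is precisely $\|\bu(t)\|_2\leq C(\log(e+t))^{-1/2}$. I expect the main obstacle to be the rigorous low-frequency estimate: because here $\gr\cdot(b\bu)=0$ rather than $\gr\cdot\bu=0$, the linear part is not a Fourier multiplier, so the bound on $\sup_{|\xi|\leq G}|\hat{\bu}|$ must be extracted from the variable-coefficient heat representation underlying \eqref{corollario_1} using the two-sided bound $b_i\leq b\leq b_s$, and the nonlinear contribution has to be closed by a bootstrap that relies only on the a priori energy bound.
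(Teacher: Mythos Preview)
Your overall scheme---choose the weight $Z(t)=\log(e+t)$, run the Fourier splitting with radius $G^2\sim[(e+t)\log(e+t)]^{-1}$, and integrate---is essentially the paper's argument (the paper uses the integrating factor $e^{2\int g^2}=[\log(e+t)]^2$ with the same $g=G$, which amounts to the same balance). The difference lies entirely in how the low--frequency piece $\int_{|\xi|\le G}|\hat\bu|^2\,d\xi$ is controlled, and there your sketch has a genuine gap.

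You claim $\sup_{|\xi|\le G}|\hat\bu(\xi,t)|\le C_0+C\,G(t)\,t$, obtained from the Duhamel formula together with $|\widehat{\bu\cdot\gr\bu}|\le|\xi|\,\|\bu\|_2^2$, $|\hat\f|\le|\xi|\,\|\bg\|_{L^\infty L^1}$, and the assertion that ``the bounded zeroth--order damping $\eta\bu$ only contributes favorably''. That last assertion is where the argument breaks. Since $\eta(\x)$ is not constant, $-\eta\bu$ is \emph{not} a Fourier multiplier; in the Duhamel picture it sits on the right--hand side with $|\widehat{\eta\bu}(\xi)|\le\|\eta\|_2\|\bu\|_2$, which carries no factor of $|\xi|$. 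The same happens with the variable--coefficient viscous contribution $\nu b^{-1}\gr b\cdot(\gr\bu+(\gr\bu)^T-\mathbf I\,\gr\!\cdot\!\bu)$, which you do not mention at all in the low--frequency step: after an integration by parts it produces another $C(1+|\xi|)\|\bu\|_2$. This is exactly what the paper records in its pointwise estimate (their Lemma~8):
\[
|\hat\bu(\xi,t)|\ \le\ \|\bu_0\|_1+C|\xi|\,t+C(1+|\xi|)\!\int_0^t\!\|\bu\|_2\,d\tau+C(1+|\xi|)\!\int_0^t\!\|\bu\|_2^2\,d\tau .
\]
If you feed in only the uniform energy bound $\|\bu\|_2\le C$, the last two integrals give $|\hat\bu|\le C(1+t)$ uniformly near $\xi=0$, and then $Z'\!\int_{|\xi|\le G}|\hat\bu|^2\sim(e+t)^{-1}\cdot G^2(1+t)^2\sim[\log(e+t)]^{-1}$, which is \emph{not} integrable; the scheme does not close.

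The paper closes this precisely by supplementing the energy bound with an $L^4$--in--time estimate for $\|\bu(\cdot)\|_2$ (their Lemma~7), obtained from an $L^p$--$L^q$ smoothing estimate for the linear semigroup (their Lemma~6) and a generalized Young inequality, under a smallness condition on the data. They then use H\"older in time to convert $\int_0^t\|\bu\|_2$ and $\int_0^t\|\bu\|_2^2$ into quantities controlled by $\bigl[\int_0^t\|\bu\|_2^4\bigr]^{1/4}$ and $\bigl[\int_0^t\|\bu\|_2^4\bigr]^{1/2}$, which after insertion into the splitting integrate to $O(\log(e+t))$, exactly matching the weight. So the missing ingredient in your plan is a substitute for this extra a~priori decay of $t\mapsto\|\bu(t)\|_2$; the uniform energy bound alone is not enough. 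One possible repair in the spirit of your ``favorable damping'' remark would be to put $\eta_i=\inf\eta>0$ into the linear part and work with the semigroup $e^{-(\nu|\xi|^2+\eta_i)t}$, which makes all the Duhamel time--integrals uniformly bounded and yields $|\hat\bu(\xi,t)|\le C(1+|\xi|)$; but that is a different argument from the one you wrote, and it still requires you to handle the $\gr b/b$ terms explicitly.
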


\vskip0.5cm	
\noindent Before establishing  the proof of the theorem, we give three preliminary lemmas:

\vskip0.5cm
\begin{lemma}
	($L^p-L^q$)-type estimate: Let us consider $\bu_0 \in L^q\cap L^2$, with $1\leq q <2$, then
	\begin{equation}
	\| e^{-\left[A_{b\nu}-\eta I \right] t } \bu_0\|_2 \leq C t^{-(1/q-1/2)}(\|\bu_0\|_2+\|\bu_0\|_q).
	\end{equation}	
\end{lemma}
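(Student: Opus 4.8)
The plan is to obtain the estimate by interpolating between the two endpoints $q=2$ and $q=1$ and then invoking the Riesz--Thorin theorem. Write $S(t)=e^{-[A_{b\nu}-\eta I]t}$ and $\bv(t)=S(t)\bu_0$; since $b_i\le b(\x)\le b_s$, the weighted norm $\|\cdot\|_{b,2}$ is equivalent to $\|\cdot\|_2$, so it suffices to control $\|\bv\|_{b,2}$. For the endpoint $q=2$ I would test the linear equation $\frac{d}{dt}\bv+A_{b\nu}\bv-\eta\bv=0$ against $\bv$ in $L^2_b$: the viscous term contributes $-2\nu_i b_i\|\gr\bv\|_2^2\le 0$, exactly as in \eqref{diseq_energia}, while the bounded, sign--definite zeroth--order term $\eta$ contributes a favorable (hence absorbable) factor, giving $\|S(t)\bu_0\|_2\le C\|\bu_0\|_2$ uniformly for $t\ge 0$.

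The decisive endpoint is the $L^1$--$L^2$ (ultracontractive) bound $\|S(t)\bu_0\|_2\le C\,t^{-1/2}\|\bu_0\|_1$, which is the on--diagonal rate $n/4=1/2$ in dimension two. I would derive it from the energy identity combined with the two--dimensional Nash inequality $\|f\|_2^{2}\le C_N\|\gr f\|_2\|f\|_1$ and an $L^1$ bound $\|\bv(t)\|_1\le C\|\bu_0\|_1$ for the semigroup (justified in the last step below). Indeed, Nash's inequality turns $-2\nu_i b_i\|\gr\bv\|_2^2$ into $-c\,\|\bv\|_{b,2}^4/\|\bu_0\|_1^2$, so that $y(t)=\|\bv\|_{b,2}^2$ obeys a Riccati differential inequality $y'\le -c\,y^2/\|\bu_0\|_1^2$, whose integration yields $\|\bv(t)\|_2\le C\,t^{-1/2}\|\bu_0\|_1$.

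With the two endpoints in hand, Riesz--Thorin interpolation between $S(t)\colon L^1\to L^2$ (norm $\le C t^{-1/2}$) and $S(t)\colon L^2\to L^2$ (norm $\le C$) gives, for $\frac1q=\frac{1-\theta}{2}+\theta$, i.e.\ $\theta=2(\frac1q-\frac12)$, the bound $\|S(t)\|_{L^q\to L^2}\le C\,t^{-\theta/2}=C\,t^{-(1/q-1/2)}$, which is precisely the claimed exponent. Splitting the datum $\bu_0\in L^q\cap L^2$ and using each endpoint on the appropriate piece produces the sum $\|\bu_0\|_2+\|\bu_0\|_q$ on the right--hand side.

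The principal obstacle is the $L^1$--$L^2$ endpoint, because $A_{b\nu}$ is not a scalar diffusion but a generalized Stokes (deformation--tensor) operator acting on the constrained space of fields with $\gr\cdot(b\bu)=0$; consequently neither Markovian $L^1$--contractivity nor the scalar Nash--Aronson Gaussian kernel bounds are available directly. To close this step I would exploit the Helmholtz--Leray decomposition on $\mathbb{R}^2$ (with $\nu$ constant, as assumed in this subsection) to reduce $A_{b\nu}$ to a perturbation of $-\nu\Delta$ possessing a genuine Gaussian heat kernel, absorbing the variable weight $b$ and the bounded term $\eta$ through a Duhamel/Gronwall argument; alternatively one may invoke the known $L^p$--$L^q$ estimates for generalized Stokes semigroups and transfer them via the equivalence $\|\cdot\|_{b,2}\sim\|\cdot\|_2$. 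Either route supplies the missing ultracontractive bound together with the $L^1$ control used above, and completes the interpolation.
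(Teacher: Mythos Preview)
Your route is sound but substantially more elaborate than the paper's. You argue from scratch: the $L^2\to L^2$ endpoint via the energy identity, the $L^1\to L^2$ ultracontractive endpoint via the two-dimensional Nash inequality combined with an $L^1$ bound for the semigroup, and then Riesz--Thorin interpolation to reach general $q$. You correctly identify the nontrivial obstacle---the absence of a direct $L^1$ theory for the constrained, non-scalar operator $A_{b\nu}$---and propose reasonable (though not fully worked-out) patches via Helmholtz--Leray reduction to a perturbed Laplacian or by transplanting known Stokes $L^p$--$L^q$ estimates.

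The paper bypasses essentially all of this. It simply records the equivalence of quadratic forms $(A_{b\nu}\bu,\bu)_b\sim\|\gr\bu\|_2^2=-(\Delta\bu,\bu)$ and uses this energy-level comparison, together with the favourable sign of the $\eta$ term, to dominate $\|e^{-[A_{b\nu}-\eta I]t}\bu_0\|_2$ directly by $C\|e^{\Delta t}\bu_0\|_2$; the $L^q$--$L^2$ decay is then read off from the classical heat-semigroup estimate. Thus the paper never needs interpolation, Nash's inequality, or an $L^1$ bound for the $A_{b\nu}$-semigroup: it borrows the smoothing wholesale from the scalar heat operator. Your approach is more self-contained and would survive in situations where such a direct comparison to $e^{\Delta t}$ is unavailable, at the cost of the extra machinery and the $L^1$ issue you must resolve; the paper's approach is shorter because it exploits the specific coercivity structure to piggyback on the heat kernel in one step.
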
		

\begin{proof}
	The proof follows from the well-known ($L^p-L^q$) type estimate for the linear heat equation and observing that
	\begin{equation}
	(A_{b\nu}\bu,\bu)_{b} \equiv -\|\gr \bu\|_2^2 \equiv (\Delta \bu,\bu),
	\end{equation}	
	then, denoting with $\bu(t)=e^{-\left[A_{b\nu}-\eta I \right]t} \bu_0$ we have:
	\begin{eqnarray*}
		\| e^{-\left[A_{b\nu}-\eta I \right]t} \bu_0 \|_2^2 &\leq&\|\bu_0\|_2^2 + C\int_0^t (A_{b\nu} \bu, \bu)_b d \tau - C\inf|\eta|\int_0^t\|\bu\|_2^2d\tau\\
		&\leq& \|\bu_0\|_2^2 + C\int_0^t \|\gr \bu\|_2^2 d \tau \\
		&\leq& C \|e^{\Delta t}\bu_0\|_2^2.
	\end{eqnarray*}

\end{proof}

\vskip0.5cm
\begin{lemma}
	Suppose that $\bu_0 \in L^2(\mathbb{R}^2) \cap L^1 (\mathbb{R}^2)$
	and that $\f$ satisfies \eqref{hypot_force_1} and \eqref{hypot_force_2}. 
	Then
	the weak solution $\bu$ of the viscous shallow water equations \eqref{equation_1_bis}-\eqref{equation_3_bis} satisfies the following a priori estimate:
	\begin{equation}
	\int_0^t \| \bu(\tau) \|_2^4 d \tau   \leq C (e+t)^{-1},\label{lemma8}
	\end{equation}
	where $C$ is a constant which depends on $\bu_0 $, $\eta$, $\f$ and $b$. 
	
\end{lemma}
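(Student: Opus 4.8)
The plan is to work with the mild (Duhamel) formulation of \eqref{equation_1_bis}--\eqref{equation_3_bis} and to feed the linear decay furnished by the ($L^p-L^q$)-type estimate of the preceding Lemma into a bootstrap for the quantity $\int_0^t\|\bu(\tau)\|_2^4\,d\tau$. Writing $S(t)=e^{-[A_{b\nu}-\eta I]t}$ for the linear solution operator and recalling that the nonlinearity is in divergence form, $B(\bu,\bu)=\gr\cdot(\bu\otimes\bu)$, the solution satisfies
\[
\bu(t)=S(t)\bu_0-\int_0^t S(t-\tau)\,\gr\cdot(\bu\otimes\bu)(\tau)\,d\tau+\int_0^t S(t-\tau)\,\f(\tau)\,d\tau .
\]
Two global bounds will be used throughout: the strong energy inequality \eqref{diseq_energia} gives $\sup_{t\ge 0}\|\bu(t)\|_2\le C_0$ together with $\int_0^{+\infty}\|\gr\bu(\tau)\|_2^2\,d\tau<\infty$, since $\f\in L^1([0,\infty),L^2_b)$.

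First I would estimate the linear term. Applying the ($L^p-L^q$)-type estimate with $q=1$ yields $\|S(t)\bu_0\|_2\le C\,t^{-1/2}(\|\bu_0\|_1+\|\bu_0\|_2)$, so that $\int_0^t\|S(\tau)\bu_0\|_2^4\,d\tau$ is controlled by $\int_0^t(e+\tau)^{-2}\,d\tau$ after splitting off a bounded contribution near $\tau=0$. For the forcing term I would exploit the structure $\f=D\bg$ in \eqref{hypot_force_1}: transferring the first-order derivative $D$ onto the semigroup produces an extra factor $(t-\tau)^{-1/2}$, so that $\|S(t-\tau)\f(\tau)\|_2\le C(t-\tau)^{-1}\|\bg(\tau)\|_1$; combining this with the pointwise bound $\|\f(\tau)\|_2\le\kappa(e+\tau)^{-2}$ from \eqref{hypot_force_2} near the diagonal $\tau=t$ tames the otherwise logarithmically singular kernel and gives the required decay of the forcing contribution.

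The main obstacle is the nonlinear term. Here I would move the divergence onto $S$, using the heat-type smoothing $\|\gr S(\sigma)\mathbf{F}\|_2\le C\sigma^{-1/2}\|\mathbf{F}\|_2$ together with the two-dimensional Ladyzhenskaya inequality $\|\bu\otimes\bu\|_2\le\|\bu\|_4^2\le C\|\bu\|_2\|\gr\bu\|_2$, to obtain
\[
\Big\|\int_0^t S(t-\tau)\,\gr\cdot(\bu\otimes\bu)\,d\tau\Big\|_2\le C\int_0^t (t-\tau)^{-1/2}\,\|\bu(\tau)\|_2\,\|\gr\bu(\tau)\|_2\,d\tau .
\]
Since $\|\bu\|_2$ is bounded and $\|\gr\bu\|_2\in L^2_t$, the integrand lies in $L^2_t$; the delicate point is that the kernel $\sigma^{-1/2}$ is only locally integrable, so I would raise the convolution to the fourth power, integrate in $\tau$, and close the estimate by Young's convolution inequality (or Hardy--Littlewood--Sobolev) against the finite dissipation $\int_0^{+\infty}\|\gr\bu\|_2^2\,d\tau$. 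Balancing the time singularity of the kernel against this quadratic feedback is the step I expect to require the most work.

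Finally I would collect the three contributions, use the elementary inequality $(a+b+c)^4\le C(a^4+b^4+c^4)$, and integrate in time. The linear and forcing pieces contribute a bound of the claimed form directly, while the nonlinear piece is absorbed through a Gronwall-type argument for $\int_0^t\|\bu(\tau)\|_2^4\,d\tau$, using that the dissipation tail $\int_s^{+\infty}\|\gr\bu\|_2^2\,d\tau$ can be made arbitrarily small for $s$ large. This yields \eqref{lemma8}, with a constant $C$ depending only on $\bu_0$, $\eta$, $\f$ and $b$.
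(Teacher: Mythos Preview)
Your strategy matches the paper's up to the absorption of the nonlinear feedback: both pass to Duhamel, use the $L^1$--$L^2$ decay for $S(t)\bu_0$, and reduce the nonlinear piece to the time-convolution $C\int_0^t (t-\tau)^{-1/2}\|\bu(\tau)\|_2\|\gr\bu(\tau)\|_2\,d\tau$. The paper then takes the $L^4_t$-norm of the pointwise Duhamel inequality and applies the \emph{generalized (weak-type) Young inequality} with the kernel $\tau^{-1/2}\in L^{2,w}$, followed by H\"older in time, to arrive at
\[
\Big(\int_0^t\|\bu\|_2^4\,d\tau\Big)^{1/4}\le C\,\|\bu_0\|_1\,(e+t)^{-1/4}
+C\Big(\int_0^t\|\gr\bu\|_2^2\,d\tau\Big)^{1/2}\Big(\int_0^t\|\bu\|_2^4\,d\tau\Big)^{1/4}+\text{(forcing)},
\]
and then absorbs the middle term into the left side \emph{under the smallness assumption} $C(1+\|\bu_0\|_2)\le\tfrac12$. (The forcing is handled by the same HLS step, using $\|\f\|_2\in L^2_t$ from \eqref{hypot_force_2}, rather than by the derivative structure \eqref{hypot_force_1}.)

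The gap in your outline is exactly this absorption step. Your proposed mechanism---a ``Gronwall-type argument'' exploiting that $\int_s^{+\infty}\|\gr\bu\|_2^2\,d\tau$ is small for $s$ large---does not close: the HLS/H\"older step produces the coefficient $\big(\int_0^t\|\gr\bu\|_2^2\big)^{1/2}$ over the \emph{whole} interval $[0,t]$, not a tail, and splitting $[0,t]=[0,s]\cup[s,t]$ leaves from $[0,s]$ a contribution of size $O(1)$ (not $o(1)$) as $t\to\infty$, since the kernel $(t-\tau)^{-1/2}$ contributes no extra decay there. A relation $X\le a+cX$ with $c$ not a priori small is not improved by Gronwall. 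In short, the paper buys the absorption with a smallness hypothesis on the data; your argument would need the same hypothesis, and the ``dissipation-tail'' idea does not replace it.
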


\begin{proof}
	From Lemma 6, we have that
	\begin{eqnarray}\label{prev}
	\|\bu\|_2 &\leq& C\|e^{-\left[A_{b\nu}-\eta I\right]t} \bu_0\|_2 +C \int_0^t \| e^{-\left[A_{b\nu}-\eta I\right](t-\tau)} P\left( \bu\cdot \gr \bu\right) \|_2 d\tau \nonumber \\
	& & \quad +C \int_0^t \| e^{-\left[A_{b\nu}-\eta I\right](t-\tau)} P\left( \f, \bu\right) \|_2 d\tau \nonumber \\
	&\leq& C t^{-1/2}\|\bu_0\|_1 + C \int_0^t (t-\tau)^{-1/2} \| \bu\cdot \gr \bu\|_1 d\tau +C \int_0^t (t-\tau)^{-1/2} \| \f \bu\|_1 d\tau
	\nonumber \\
	&\leq& C (e+t)^{-1/2}\|\bu_0\|_1 + C \int_0^t (t-\tau)^{-1/2} \| \bu\|_2\|\gr \bu\|_2 d\tau + \nonumber \\
	&& \qquad +  C \int_0^t (t-\tau)^{-1/2} \| \bu\|_2\|\f\|_2 d\tau
	\end{eqnarray}	
	Consider the generalized Young's inequality \cite{RS75,KM86} for convolution:

	if $f \in L^p$ and $g \in L^{q,w}$, with $1<p,q,r<\infty$ and $p^{-1}+r^{-1} = 1+q^{-1}$ then
	\begin{equation}\label{generalized_Young}
	\|f\star g\|_q  \leq C_{p,r}  \|f\|_p \|g\|_{r,w},
	\end{equation}
	where the $L^{r,w}$ is the weak $L^r$ space with norm
	$\|g\|_{r,w} +\sup_{t} (t^r\mu\{x:g(x)>t\})^{1/r}$.
	
	Now, let $q=4$ and $1+\frac{1}{q}=\frac{1}{2}+\frac{q+2}{2q}$, and applying \eqref{generalized_Young} to \eqref{prev}:	
	\begin{eqnarray*}
		&&\hskip-0.2cm\left[\int_0^t \| \bu(\tau) \|_2^q d \tau \right]^{1/q}\leq C \|\bu_0\|_1 (e+t)^{1/q-1/2} \\
		&&\hskip1.5cm+ C \left[ \int_0^t 
		\left( \|\bu(\tau)\|_2 \|\gr \bu (\tau)\|_2 \right)^{\frac{2q}{q+2}} d \tau \right]^{\frac{q+2}{2q}} \\
		&&\hskip1cm + C \left[ \int_0^t 
		\left( \|\bu(\tau)\|_2 \|\f (\tau)\|_2 \right)^{\frac{2q}{q+2}} d \tau \right]^{\frac{q+2}{2q}} \\
		&&\leq  C \|\bu_0\|_1 (e+t)^{1/q-1/2} + C \left[ \int_0^t \|\bu(\tau)\|_2^q d \tau \right]^{\frac{1}{q}} \left[\int_0^t \|\gr \bu (\tau)\|_2^2 d \tau \right]^{\frac{1}{2}}\\
		&& \quad +C \left[ \int_0^t \|\bu(\tau)\|_2^q d \tau \right]^{\frac{1}{q}} \left[\int_0^t \| \f \|_2^2 d \tau \right]^{\frac{1}{2}} \\
		&&\leq C \|\bu_0\|_1 (e+t)^{1/q-1/2} + C(1+\|\bu_0\|_2) \left[ \int_0^t \|\bu(\tau)\|_2^q d \tau \right]^{\frac{1}{q}},
	\end{eqnarray*}	
	and assuming that $C(1+\| \bu_0\|_2)\leq 1/2$, we have \eqref{lemma8}.
	  
\end{proof}

\vskip0.5cm
\begin{lemma}
	Suppose that $\bu_0 \in L^2(\mathbb{R}^2) \cap L^1 (\mathbb{R}^2)$
	and that $\f$ satisfies \eqref{hypot_force_1} and \eqref{hypot_force_2}. 
	Then
	the weak solution $\bu$ of the viscous shallow water equations \eqref{equation_1_bis}-\eqref{equation_3_bis} satisfies the following a priori estimate:
	\begin{eqnarray}
	|\hat{\bu}(\xi,t)|  &\leq& \|\bu_0\|_1 +C|\xi| t+ C(1+|\xi|)\int_0^t \| \bu(\tau) \|_2 d \tau \nonumber \\
	&&+ C(1+|\xi|)\int_0^t \| \bu(\tau) \|_2^2 d \tau.\label{A1}
	\end{eqnarray}
	
\end{lemma}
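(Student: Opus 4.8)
The plan is to base the estimate on Duhamel's formula written with the \emph{clean} heat semigroup $e^{\nu t\Delta}$, rather than with the variable--coefficient semigroup $e^{-[A_{b\nu}-\eta I]t}$ used above: since the latter is not a Fourier multiplier it does not interact well with a pointwise (in $\xi$) estimate. First I would project \eqref{equation_1_bis} by the (weighted) Leray projection $P$ onto the fields with $\gr\cdot(b\bu)=0$, which removes the pressure $\gr p$. I would then split the viscous operator into its constant--coefficient principal part and a lower order remainder: since $\gr\cdot[\gr\bu+(\gr\bu)^{T}-\mathbf{I}\gr\cdot\bu]=\Delta\bu$, one has
\begin{equation*}
\frac{\nu}{b}\gr\cdot\left[b\left(\gr\bu+(\gr\bu)^{T}-\mathbf{I}\gr\cdot\bu\right)\right]=\nu\Delta\bu+R[\bu],\qquad R[\bu]=\frac{\nu}{b}\,\gr b\cdot\left(\gr\bu+(\gr\bu)^{T}-\mathbf{I}\gr\cdot\bu\right),
\end{equation*}
where $R[\bu]$ is \emph{first order} in $\bu$ with coefficient $\tfrac{\nu}{b}\gr b$, which is bounded and (by the assumed decay of the topography towards a constant) square integrable. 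Duhamel's formula then reads
\begin{equation*}
\bu(t)=e^{\nu t\Delta}\bu_0+\int_0^t e^{\nu(t-\tau)\Delta}P\left[R[\bu]-\eta\bu-\bu\cdot\gr\bu+\f\right](\tau)\,d\tau .
\end{equation*}

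Next I would take the Fourier transform. The heat multiplier satisfies $0<e^{-\nu(t-\tau)|\xi|^{2}}\le1$, and the symbol of the projection is bounded, $|\widehat{P\mathbf{h}}(\xi)|\le C\,|\hat{\mathbf{h}}(\xi)|\le C\|\mathbf{h}\|_{1}$, so it suffices to control the Fourier transform of each source term through its $L^{1}$ norm. Term by term: $|\hat{\bu}_0(\xi)|\le\|\bu_0\|_1$ gives the first term; writing $\f=D\bg$ as in \eqref{hypot_force_1}, $|\hat{\f}(\xi)|\le|\xi|\,\|\bg\|_1\le C|\xi|$, and integrating in time yields the $C|\xi|t$ term; the friction term gives $|\widehat{\eta\bu}(\xi)|\le\|\eta\bu\|_1\le\|\eta\|_2\|\bu\|_2$ by Cauchy--Schwarz (using, as is natural, $\eta\in L^2$). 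For the remainder $R[\bu]$ I would integrate by parts on the Fourier side, writing it as $\gr\cdot(\tfrac{\nu}{b}\gr b\otimes\bu)$ minus a zeroth order term, so that the factor $i\xi$ comes out in front; this produces $|\widehat{R[\bu]}(\xi)|\le C(1+|\xi|)\|\bu\|_2$, again because $\tfrac{\nu}{b}\gr b$ and its derivative lie in $L^2$. Collecting the two linear contributions gives the $C(1+|\xi|)\int_0^t\|\bu\|_2\,d\tau$ term of \eqref{A1}.

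For the nonlinear term I would exploit the weighted incompressibility. From \eqref{equation_2_bis}, $\gr\cdot(b\bu)=0$ gives $\gr\cdot\bu=-\bu\cdot\gr b/b$, which is pointwise $\le C|\bu|$. Hence
\begin{equation*}
\bu\cdot\gr\bu=\gr\cdot(\bu\otimes\bu)-(\gr\cdot\bu)\bu=\gr\cdot(\bu\otimes\bu)+\frac{\bu\cdot\gr b}{b}\,\bu,
\end{equation*}
and, taking the Fourier transform, $|\widehat{\bu\cdot\gr\bu}(\xi)|\le|\xi|\,\|\bu\otimes\bu\|_1+\left\|\tfrac{\gr b}{b}\right\|_\infty\|\bu\|_2^2\le C(1+|\xi|)\|\bu\|_2^2$, which after integration is the last term of \eqref{A1}. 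Adding the four contributions yields the claimed inequality.

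The main obstacle is the joint handling of the pressure and the variable, weighted structure. Because the Leray projection is \emph{not} bounded on $L^1$, the estimate cannot be made in physical space but must be carried out on the Fourier side, where the (weighted) projection acts as a multiplier of bounded norm; verifying this boundedness for the constraint $\gr\cdot(b\bu)=0$ is the delicate point. The second point requiring care is that \eqref{A1} reproduces $\|\bu\|_2$, not $\|\bu\|_1$, in the linear terms, which forces one to convert the $L^1$ norms of the products $\eta\bu$ and $(\gr b)\cdot\gr\bu$ into $\|\bu\|_2$ by Cauchy--Schwarz, and hence to use that the coefficients $\eta$ and $\gr b$ are square integrable; this is where the decay hypotheses on the friction and on the bottom topography enter.
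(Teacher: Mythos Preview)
Your proposal is correct and follows essentially the same route as the paper: the paper also rewrites the equation as $\partial_t\bu=\nu\Delta\bu+G(\bu)+\f$ with exactly your remainder $R[\bu]$ inside $G$, applies Duhamel with the clean heat semigroup, and then bounds $|\widehat{P\f}|\le C|\xi|$, $|\widehat{\eta\bu}|\le\|\eta\|_2\|\bu\|_2$, $|\widehat{R[\bu]}|\le C(1+|\xi|)\|\bu\|_2$ via the same integration by parts you describe, and $|\widehat{\bu\cdot\gr\bu}|\le C(1+|\xi|)\|\bu\|_2^2$ via the identity $\bu\cdot\gr\bu=\gr\cdot(\bu\otimes\bu)+\tfrac{\bu\cdot\gr b}{b}\bu$ coming from $\gr\cdot(b\bu)=0$. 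Your caveats about the boundedness of the weighted Leray projector on the Fourier side and the need for $\eta,\gr b\in L^2$ are well taken; the paper uses these facts implicitly without comment.
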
		

\begin{proof}
	
	Write the viscous shallow water equations in the following way.
	\begin{eqnarray*}
		\frac{\partial \bu}{\partial t} &=&\nu \Delta \bu + G(\bu) +\f, \\ 
		&&\gr \cdot (b \bu)=0,
	\end{eqnarray*}
	where
	\begin{equation}
	G(\bu)=-\bu\cdot \gr \bu -\eta \bu -\gr p + \nu \frac{\gr b}{b}\left(\gr \bu +(\gr \bu)^T - \gr\cdot \bu \mathbb{I} \right). \nonumber
	\end{equation} 
	Hence
	\begin{equation}
	\hat{\bu}=e^{-|\xi|^2 t} \hat{\bu}_0 + \int_0^t e^{-|\xi|^2 (t-s)} \left( \widehat{P(G)} + \widehat{P\f}\right) d s,\label{inter}
	\end{equation}
	where $P$ is the projection form $L^2_b$ in $H$.
	
	By assumption $\f=D \boldmath{g} $ where $D$ is any first order derivative and $ \boldmath{g} \in L^\infty \left( \left[0 \right. ,  \left. +\infty \right), L^1(\mathbb{R}^2)  \right)$, hence
	$$|\widehat{P\f}|\leq C|\xi|.$$ 
	
	We prove later that
	\begin{equation}\label{A2}
	|\widehat{P(G)}|\leq C(1+ |\xi|)(\|\bu(t) \|_2^2+\|\bu(t) \|_2),
	\end{equation}
	where $C$ is a constant which depends on $b$. 
	Using \eqref{A2} in \eqref{inter} and integrate in time \eqref{inter}, we have
	\begin{equation}
	|\hat{\bu}(\xi,t)|  \leq |\hat{\bu}(\xi,0)|+ C|\xi| t +C(1+|\xi|)\int_{0}^{t} \| \bu(\tau)\|_2 d \tau + C(1+|\xi|)\int_{0}^{t} \| \bu(\tau)\|_2^2 d \tau, \nonumber
	\end{equation}
	which is \eqref{A1}.
	To complete the proof, we finally show that \eqref{A2} holds,  
	$$|\widehat{P(G)}|\leq C(1+ |\xi|)(\|\bu(t)\|_2^2+\|\bu(t) \|_2).$$
	
	Using \eqref{equation_2_bis}
	\begin{eqnarray*}
		|\widehat{P(\bu\cdot \gr \bu)}|&\leq& \left|\int \gr \cdot (\bu \otimes \bu) e^{i \xi\cdot \x} d \x \right| + \left| \int \bu \frac{\gr b}{b} \cdot \bu e^{i \xi\cdot \x} d \x \right| \\
		&\leq& |\xi| \| \bu \otimes \bu \|_1 + \| \bu \frac{\gr b}{b} \cdot \bu\|_1 \\
		&\leq& |\xi| \| \bu \|_2^2 + C \| \bu \|_2^2\leq C(1+ |\xi|)\|\bu(t)\|_2^2,
	\end{eqnarray*}

	and
	
	\begin{eqnarray*}
		&&\left|\widehat{P\left[\frac{\gr b}{b}\left(\gr \bu +(\gr \bu)^T - \gr\cdot \bu I \right)\right]}\right| \leq \left| \int \gr \left(\frac{\gr b}{b}\cdot \bu\right) e^{i \xi \cdot \x} d \x \right| + \\
		&&\qquad + 2\left| \int \gr \left(\frac{\gr b}{b}\right)\bu e^{i \xi \cdot \x} d \x \right|
		+\left| \int \gr \cdot \left(\frac{\gr b}{b}\otimes \bu\right) e^{i \xi \cdot \x} d \x \right| +\\
		&&+\left| \int \left(\gr \cdot \frac{\gr b}{b}\right)\bu e^{i \xi \cdot \x} d \x \right|+\left| \int \gr \cdot \left(\bu \otimes \frac{\gr b}{b}\right) e^{i \xi \cdot \x} d \x \right|\\
		&\leq& |\xi|\left(\|\frac{\gr b}{b}\cdot \bu \|_1 
		+\| \frac{\gr b}{b} \otimes \bu \|_1
		+\| \bu \otimes \frac{\gr b}{b} \|_1 \right) +\\
		&& + \|\left(\gr \cdot \frac{\gr b}{b}\right)\bu \|_1 
		+\| \gr \left( \frac{\gr b}{b}\right)\bu\|_1\\				
		&\leq& C(1+|\xi|)\|\bu(t)\|_2.
	\end{eqnarray*}
	
	Finally		
	\begin{equation}
	|\widehat{P(\eta \bu)}|=\left|\int \eta \bu e^{i \xi\cdot \x} d \x \right|\leq \|\eta \|_2 \| \bu(t) \|_2 \leq C\|\bu(t)\|_2 .\nonumber
	\end{equation}

\end{proof}

\vskip0.5cm

We are now in the position to give the proof of the main theorem of this section:	

\vskip0.5cm	
\begin{proof}
	Taking the scalar product of \eqref{equation_1_bis} with $\bu$ and using Plancherel's theorem, we have
	
	\begin{equation}
	\frac{d}{d t} \int_{\mathbb{R}^2} |\hat{\bu}(\xi,t)|^2 d \xi + \int_{\mathbb{R}^2} |\xi|^2 |\hat{\bu}(\xi,t)|^2 d \xi \leq  |(\f,\bu)_b|. \nonumber
	\end{equation}
	For the second term
	
	\begin{eqnarray*}
		\int_{\mathbb{R}^2} |\xi|^2 |\hat{\bu}(\xi,t)|^2 d \xi &\geq& \int_{G(t)^c} |\xi|^2 |\hat{\bu}(\xi,t)|^2 d \xi \\
		&\geq& g^2(t) \int_{G(t)^c} |\hat{\bu}(\xi,t)|^2 d \xi \\
		&=& g^2(t) \int_{\mathbb{R}^2} |\hat{\bu}(\xi,t)|^2 d \xi - g^2(t) \int_{G(t)} |\hat{\bu}(\xi,t)|^2 d \xi 
	\end{eqnarray*}
	where $G(t)=\left\{ \xi \in \mathbb{R}^2 : |\xi| < g(t) \right\}$ and $g\in C\left(\left[0,\infty \right]; \mathbb{R}_{+}\right)$ which can be determinate later.
	
	Then 
	
	\begin{equation}
	\frac{d}{d t} \int_{\mathbb{R}^2} |\hat{\bu}(\xi,t)|^2 d \xi + g^2(t) \int_{\mathbb{R}^2} |\hat{\bu}(\xi,t)|^2 d \xi \leq g^2(t) \int_{G(t)} |\hat{\bu}(\xi,t)|^2 d \xi + (\f,\bu)_b, \nonumber
	\end{equation}
	and by Lemma 7 we have
	
	\begin{eqnarray*}
		& &\frac{d}{d t} \int_{\mathbb{R}^2} |\hat{\bu}(\xi,t)|^2 d \xi + g(t)^2 \int_{\mathbb{R}^2} |\hat{\bu}(\xi,t)|^2 d \xi \leq \\
		& & \leq 2 \pi g^2(t) \int_0^{g(t)} \left[\|\bu_0\|_1 + C r t+ C(1+r)\int_0^t \| \bu(\tau) \|_2 d \tau \right.\\
		&&\qquad\qquad \left.+ C(1+r)\int_0^t \| \bu(\tau) \|_2^2 d \tau \right]^{2} r d r   + |(\f,\bu)_b|,
	\end{eqnarray*}
	and by H\"older inequality it is possible to write as:
	\begin{eqnarray*}
		& &\frac{d}{d t} \int_{\mathbb{R}^2} |\hat{\bu}(\xi,t)|^2 d \xi + g(t)^2 \int_{\mathbb{R}^2} |\hat{\bu}(\xi,t)|^2 d \xi \leq \\
		& & \leq 2 \pi g^2(t) \int_0^{g(t)} \left[\|\bu_0\|_1^2 + C r^2 t^2+ C(1+r^2) t^{\frac{3}{2}}  \left[\int_0^t \| \bu(\tau) \|_2^4 d \tau \right]^{\frac{1}{2}} +\right.\\ &&  \qquad \left.+ C(1+r^2) t \int_0^t \| \bu(\tau) \|_2^4 d \tau \right] r d r  +|(\f,\bu)_b|.
	\end{eqnarray*}
	
	Integrating in time, we have:
	
	\begin{eqnarray*}
		& & e^{2\int_{0}^{t} g^2(s) d s} \| \bu(t) \|_2^2 \leq \| \bu_0 \|_2^2 + \\
		& & + 2 \pi \|\bu_0\|_1^2 \int_{0}^{t} e^{2\int_{0}^{t} g^2(s) d s}  g^4(s) ds \\
		&& + 2 \pi C \int_{0}^{t} e^{2\int_{0}^{t} g^2(s) d s}  g^6(s) s^2 ds \\
		& & + C \int_{0}^{t} e^{2\int_{0}^{t} g^2(s) d s} (g^4(s)+g^6(s)) s^{\frac{3}{2}} \left[\int_0^s \| \bu(\tau) \|_2^4 d \tau \right]^{\frac{1}{2}} ds \\
		& & + C \int_{0}^{t} e^{2\int_{0}^{t} g^2(s) d s} (g^4(s)+g^6(s)) s \left[\int_0^s \| \bu(\tau) \|_2^4 d \tau \right]d s \\
		&&+ \int_{0}^{t} e^{2\int_{0}^{t} g^2(s) d s}| (\f,\bu)_b |d s.
	\end{eqnarray*}
	
	To obtain a basic estimate, we take 
	
	\begin{eqnarray*}
		&& g^2(t)=\frac{1}{(e+t)\log(e+t)}, \\
		&& e^{2\int_{0}^{t} g^2(s) d s} = \left[\log(e+t)\right]^{2},
	\end{eqnarray*}
	
	then
	
	\begin{eqnarray*}
		& & \int_{0}^{t} e^{2\int_{0}^{t} g^2(s) d s}  g^4(s) ds \leq 
		C \int_{0}^{t} \frac{1}{(e+s)^2} ds  \leq C , \\
		& & \int_{0}^{t} e^{2\int_{0}^{t} g^2(s) d s}  g^6(s) s^2 ds \leq 
		C \int_{0}^{t} \frac{s^2}{(e+s)^3 \log(e+s)} ds  \leq C \log\left(\log(e+t)\right) , \\
		& & \int_{0}^{t} e^{2\int_{0}^{t} g^2(s) d s} g^6(s) s^{\frac{3}{2}} \left[\int_0^s \| \bu(\tau) \|_2^4 d \tau \right]^{\frac{1}{2}} d s \leq\\
		&& \hskip2cm \leq 
		C \int_{0}^{t} \frac{s^2 \|\bu_0 \|^2_2}{(e+s)^3 \log(e+s)} ds  \leq C \log\left(\log(e+t)\right) ,\\
		& & \int_{0}^{t} e^{2\int_{0}^{t} g^2(s) d s} g^6(s) s \left[\int_0^s \| \bu(\tau) \|_2^4 d \tau \right]d s \leq\\
		&& \hskip2cm \leq C \int_{0}^{t} \frac{s^2 \|\bu_0 \|_2^4}{(e+s)^3 \log(e+s)} ds  \leq C \log\left(\log(e+t)\right),
	\end{eqnarray*}
	
	and using Lemma 8:
	
	\begin{eqnarray*}
		& & \int_{0}^{t} e^{2\int_{ 0}^{t} g^2(s) d s} g^4(s) s^{\frac{3}{2}} \left[\int_0^s \| \bu(\tau) \|_2^4 d \tau \right]^{\frac{1}{2}} ds \leq\\
		&& \hskip3cm \leq  
		C \int_{0}^{t}  \frac{s^{\frac{3}{2}} }{(e+s)^{\frac{5}{2}}} ds  \leq C\log(e+t) ,\\
		& & \int_{0}^{t} e^{2\int_{ 0}^{t} g^2(s) d s} g^4(s) s \left[\int_0^s \| \bu(\tau) \|_2^4 d \tau \right] ds \leq 
		C \int_{0}^{t} \frac{s }{(e+s)^3} ds  \leq C .\\
	\end{eqnarray*}
	
	Finally, using the hypothesis \eqref{hypot_force_2} on $\f$ the last term is
	
	\begin{eqnarray*}
		& & \int_{0}^{t} e^{2\int_{ 0}^{t} g^2(s) d s} |(\f,\bu)_b|ds \leq C \int_{0}^{t} \left[\log(e+s)\right]^{2} \|\f\|_2 \|\bu\|_2 ds \\
		&&\qquad  \leq C \|\bu_0\|_2 \int_{0}^{t} \frac{\log^2(e+s)}{(e+s)^2}   ds \leq C 
	\end{eqnarray*}	
	
	Hence
	
	\begin{equation}
	\left[\log(e+t)\right]^{2}\|\bu(t) \|_2^2 \leq C \left[1 + \log\left(\log(e+t)\right)+ \log(e+t)\right] , \nonumber
	\end{equation}	
	
	and the theorem is proved.	  
	
\end{proof}


\section*{Acknowledgement}

The work of the authors has been partially supported by FFR grant of the Department of Mathematics University of Palermo.

The work of M.E. Schonbek has been partially supported by NSF Grant DMS-0900909.

The work of M. Sammartino has been partially supported by the GNFM of INDAM.

The work of V. Sciacca has been partially supported by
GNFM/INdAM through a Progetto Giovani grant.









\end{document}